 \newtheorem{thm}{Theorem}[section]
 \newtheorem{cor}[thm]{Corollary}
 \newtheorem{lem}[thm]{Lemma}
 \newtheorem{prop}[thm]{Proposition}
 \newtheorem{rem}[thm]{Remark} 
 \numberwithin{equation}{section}
\renewcommand{\phi}{\varphi}
\renewcommand{\epsilon}{\varepsilon}
\newcommand{\U}{\mathbb{U}}
\newcommand{\bigoh}{\mbox{O}}
\newcommand{\bdu}{\partial \U}
\newcommand{\C}{\mathbb{C}}
\newcommand{\clspan}[1]{{\overline{\rm span}\,}\seq{#1}}
\newcommand{\const}{{\rm const.\,}}
\newcommand{\cphi}{C_\phi}
\newcommand{\cpsi}{C_\psi}
\newcommand{\cphidf}{\cphi |_{D_f}}
\newcommand{\eith}{e^{i\theta}}
\newcommand{\eps}{\varepsilon}
\newcommand{\goesto}{\rightarrow}
\newcommand{\onehalf}{\frac{1}{2}}
\newcommand{\htwo}{H^2}
\newcommand{\hol}[1]{\rm Hol\,(#1)}
\newcommand{\infsum}{\sum_{n=0}^\infty}
\newcommand{\inv}{^{-1}}
\newcommand{\invtwopi}{\frac{1}{2\pi}}
\newcommand{\intpi}{\int_{-\pi}^\pi}
\newcommand{\intbdu}{\int_{\bdu}}
\newcommand{\linspan}[1]{{\rm span}\,\{#1\}}
\newcommand{\norm}[1]{\|#1\|}
\newcommand{\Prn}{P_{r_n}}
\newcommand{\R}{\mathbb{R}}
\newcommand{\re}{{\rm Re\,}}
\newcommand{\rhp}{\rm \Pi^+}
\newcommand{\seq}[1]{\{#1\}}
\newcommand{\talpha}{\tilde{\alpha}}
\newcommand{\tbeta}{\tilde{\beta}}
\newcommand{\tphi}{\tilde{\phi}}
\newcommand{\Z}{\mathbb{Z}}
\begin{document}

\title[Hyperbolic composition operators]
 {Eigenfunctions for hyperbolic\\ composition operators---redux}
\author[J. H. Shapiro]{Joel H. Shapiro}

\address{%
Department of Mathematics and Statistics, 
Portland State University, 
Portland, OR 97207 
USA}

\email{shapiroj@pdx.edu}

\subjclass{Primary 47B33; Secondary 47A15}

\keywords{Composition operator, hyperbolic automorphism, Invariant Subspace Problem}

\begin{abstract}
The Invariant Subspace Problem (``ISP'') for Hilbert space operators is known to be  equivalent to a question that, on its surface, seems surprisingly concrete: {\em For composition operators induced on the Hardy space $\htwo$ by hyperbolic automorphisms of the unit disc, is every nontrivial minimal invariant subspace one dimensional (i.e., spanned by an eigenvector)?}  In the hope of reviving interest in the contribution this remarkable result might offer to the studies of both composition operators and the ISP, I revisit some known results, weaken their hypotheses and simplify their proofs. {\em Sample results:} If $\phi$ is a hyperbolic disc automorphism with fixed points at $\alpha$ and $\beta$ (both necessarily on the unit circle), and $\cphi$ the composition operator it induces on $\htwo$, then for every $f\in\sqrt{(z-\alpha)(z-\beta)}\,\htwo$, the doubly $\cphi$-cyclic subspace generated by $f$ contains many independent eigenvectors; more precisely, the point spectrum  of $\cphi$'s restriction to that subspace  intersects the unit circle in a set of positive measure.
Moreover, this restriction of $\cphi$ is hypercyclic (some forward  orbit is dense). Under the stronger restriction $f\in\sqrt{(z-\alpha)(z-\beta)}\,H^p$  for some $p>2$, the point spectrum of the restricted operator contains an open annulus centered at the origin. 
\end{abstract}

\maketitle

\section{Introduction}\label{introduction}
More than twenty years ago  Nordgren, Rosenthal, and Wintrobe \cite{NRW} made a surprising connection between composition operators on the Hardy space $\htwo$ and the Invariant Subspace Problem---henceforth, the ``ISP''. The ISP asks if every operator on a separable Hilbert space has a nontrivial invariant subspace (following tradition: ``operator'' means ``bounded linear operator,'' ``subspace'' means ``closed linear manifold,'' and for a subspace, ``nontrivial'' means ``neither the whole space nor the zero-subspace'').  Nordgren, Rosenthal, and Wintrobe proved the following \cite[Corollary 6.3, page 343]{NRW}:  
\begin{quote}
 {\em Suppose $\phi$ is a hyperbolic automorphism of the open unit disc $\U$. Let $\cphi$ denote the composition operator induced by $\phi$ on the Hardy space $\htwo$. Then the ISP has a positive solution if and only if every nontrivial minimal $\cphi$-invariant subspace of $\htwo$ has dimension one.}
 \end{quote}
It is easy to see that, for each nontrivial minimal invariant subspace $V$ of a Hilbert space operator $T$, every non-zero vector $x\in V$ is cyclic, i.e., $\linspan{T^nx: n = 0, 1, 2, \ldots}$  is dense in $V$. If, in addition,  $T$ is invertible, then so is its restriction  to $V$ (otherwise the range of this restriction would be a nontrivial invariant subspace strictly contained in $V$, contradicting minimality). Thus for $T$ invertible, $V$ a nontrivial minimal invariant subspace of $T$, and $0\neq x \in V$,
$$
    V = \clspan{T^n x: n = 0, 1, 2, \ldots} = \clspan{T^n x: n\in\Z},
$$
where now ``$\overline{\rm span}$ '' means ``closure of the linear span.''

The result of Nordgren, Rosenthal, and Wintrobe therefore suggests that for $\phi$ a hyperbolic disc automorphism we might profitably study how the properties of a function $f$ in $\htwo\backslash\{0\}$ influence the operator-theoretic properties of $\cphidf$, the restriction of $\cphi$ to the ``doubly cyclic'' subspace
subspace
\begin{equation} \label{df_defn}
  D_f := \clspan{\cphi^n f: n\in\Z} = \clspan{f\circ\phi_n: n \in \Z},
\end{equation}
with particular emphasis on the question of when the point spectrum of the restricted operator is nonempty. (Here, for $n$ is a positive integer, $\phi_n$ denotes the $n$-th compositional iterate of $\phi$, while $\phi_{-n}$ is the  $n$-th iterate of $\phi\inv$;   $\phi_0$ is the identity map.)

Along these lines, Valentin Matache \cite[1993]{Mat} obtained a number of interesting results on minimal invariant subspaces for hyperbolic-automorphically induced composition operators. He observed, for example, that if a minimal invariant subspace for such an operator were to have dimension larger than $1$, then, at either of the fixed points of $\phi$, none of the non-zero elements of that subspace could be both continuous and non-vanishing (since $\phi$ is a hyperbolic automorphism of the unit disc, its fixed points must necessarily lie on the unit circle; see \S\ref{hypautos} below). Matache also obtained interesting results on the possibility of minimality for invariant subspaces generated by inner functions.
 
Several years later Vitaly Chkliar \cite[1996]{Chk} proved this result for hyperbolic-automorphic composition operators $\cphi$:
\begin{quote}
 {\em  If $f\in\htwo\backslash\{0\}$ is bounded in a neighborhood of one fixed point of $\phi$, and at the other fixed point vanishes to some order $\eps>0$, then the point spectrum of $\cphidf$ contains an open annulus centered at the origin.} 
\end{quote}
 Later Matache \cite{Mat2} obtained similar conclusions under less restrictive hypotheses. 
 
In the work below, after providing some background (in \S\ref{background}), I  revisit in \S \ref{main_results} and \S\ref{complements} the work of Chkliar and Matache,  providing simpler proofs of stronger results. Here is a sample: {\em for $\phi$ a hyperbolic automorphism of $\U$ with fixed points $\alpha$ and $\beta$ (necessarily on $\bdu$): }
  \begin{itemize}
\item[(a)] {\em If $f\in\sqrt{(z-\alpha)(z-\beta)}\,\htwo\backslash\{0\}$, then $\sigma_p(\cphidf)$ intersects the unit circle in a set of positive measure.} 
\item[(b)] {\em  If $f\in\sqrt{(z-\alpha)(z-\beta)}\,H^p\backslash\{0\}$  for some $p>2$, then $\sigma_p(\cphidf)$ contains an open annulus centered at the origin.}
\end{itemize}
 Note that the function $\sqrt{(z-\alpha)(z-\beta)}$ is an outer function, so the set of functions $f$ being singled out in both parts (a) and (b) is dense in $\htwo$.

 Finally, observe that, in the hypotheses of both (a) and (b), the exponent ``$\frac{1}{2}$'' is best possible in the sense that for any smaller exponent the function $f\equiv 1$, for which $D_f$ is the one dimensional subspace of constant functions, would satisfy both hypotheses. This comment applies throughout the sequel.
 
\subsection*{Acknowledgement} I wish to thank Professor Paul Bourdon  of Washington and Lee University for many suggestions that corrected and improved a preliminary version of this paper.

\section{Background material} \label{background}

\subsection{Disc automorphisms} \label{hypautos}
An {\em automorphism\/} of a domain in the complex plane is a univalent holomorphic mapping of that domain onto itself. Every automorphism of the open unit disc $\U$ is a linear fractional map \cite[Theorem 12.6, page 255]{Rud}. 
 
 Linear fractional maps can be regarded as  homeomorphisms of the Riemann Sphere; as such, each one that is not the identity map has one or two fixed points. The maps with just one fixed point are the {\em parabolic\/} ones; each such map is conjugate, via an appropriate linear fractional map, to one that fixes only the point at infinity, i.e., to a translation. A linear fractional map that fixes two distinct points is conjugate, again via a linear fractional map, to one that fixes both the origin and the point at infinity, i.e., to a dilation $w\goesto \mu w$ of the complex plane, where $\mu\neq 1$ is a complex number called the {\em multiplier\/} of the original map (actually $1/\mu$ can just as well occur as the multiplier---depending on which fixed point of the original map is taken to infinity by the conjugating transformation).  The original map is called {\em elliptic\/} if $|\mu|=1$, hyperbolic  if $\mu$ is positive, and {\em loxodromic\/} in all other cases (see, for example, \cite[Chapter 0]{S} for more details).

Suppose $\phi$ is a hyperbolic automorphism of $\U$. Then the same is true of its inverse. The fixed points of $\phi$ must necessarily lie on $\bdu$, the unit circle. Indeed,  if the attractive fixed point of $\phi$ lies outside the closed unit disc, then the compositional iterates of $\phi$ pull $\U$ toward that fixed point, and hence outside of $\U$,  which contradicts the fact that $\phi(\U)=\U$. If, on the other hand, the attractive fixed point lies in $\U$, then its reflection in the unit circle is the repulsive fixed point, which is the attractive one for $\phi^{-1}$. Thus $\phi^{-1}$ can't map $\U$ into itself, another contradiction. Conclusion: both fixed points lie on $\bdu$.

Let's call a hyperbolic automorphism $\phi$ of $\U$ {\em canonical\/} if it fixes the points $\pm 1$, with $+1$ being the attractive fixed point. We'll find it convenient to move between the open unit disc $\U$ and the open right half-plane $\rhp$ by means of the {\em Cayley transform\/} $\kappa:\Pi^+ \goesto \U$ and its inverse $\kappa\inv:\U\goesto \Pi^+$, where
$$
   \kappa(w) = \frac{w-1}{w+1} 
                 \quad {\rm and} \quad 
       \kappa\inv(z) = \frac{1+z}{1-z}                      
                   \qquad (z\in\U, w\in\Pi^+).        
$$
In particular, if $\phi$ is a canonical hyperbolic automorphism of $\U$, then $\Phi:=\kappa\inv\circ\phi\circ\kappa$ is an automorphism of $\rhp$ that fixes $0$ and $\infty$, with $\infty$ being the attractive fixed point. Thus $\Phi(w) = \mu w$ for some $\mu >1$, and $\phi = \kappa\circ\Phi\circ\kappa\inv$, which yields, after a little calculation,
\begin{equation} \label{canonical_auto}
  \phi(z) = \frac{r+z}{1+rz} \qquad {\rm where} 
                     \qquad \phi(0)= r = \frac{\mu-1}{\mu+1} \in (0,1).
\end{equation}
If $\phi$ is a hyperbolic automorphism of $\U$ that is not canonical, then it can be conjugated, via an appropriate automorphism of $\U$, to one that is. This is perhaps best seen by transferring attention to the right half-plane $\rhp$, and observing that if $\alpha <\beta$ are two real numbers, then the linear fractional map $\Psi$ of $\rhp$ defined by
$$
    \Psi(w) ~=~ i \, \frac{w-i\beta}{w-i\alpha}
$$
preserves the imaginary axis, and takes the point 1 into $\rhp$. Thus it is an automorphism of $\rhp$ that takes the boundary points $i\beta$ to zero and $i\alpha$ to infinity. Consequently if $\Phi$ is any hyperbolic automorphism of $\rhp$ with fixed points $i\alpha$ (attractive) and  $i\beta$ (repulsive), then $\Psi\circ\Phi\circ\Psi^{-1}$ is also hyperbolic automorphism with attractive fixed point $\infty$ and repulsive fixed point $0$. If, instead, $\alpha > \beta$ then $-\Psi$ does the job.
 
Since any hyperbolic automorphism $\phi$ of $\U$ is conjugate, via an automorphism, to a canonical one, $\cphi$ is similar, via the composition operator induced by the conjugating map, to a composition operator induced by a canonical hyperbolic automorphism. For this reason the work that follows will focus on the canonical case. 
 	
\subsection{Spectra of hyperbolic-automorphic composition operators}%
\label{spectra}

Suppose $\phi$ is a hyperbolic automorphism of $\U$ with multiplier $\mu>1$. Then it is easy to find lots of eigenfunctions/eigenvalues for $\cphi$ on $\htwo$. We may without loss of generality assume that $\phi$ is canonical, and then move, via the Cayley map, to the right half-plane where $\phi$ morphs into the dilation $\Phi(w) = \mu w$. Let's start by viewing the composition operator $C_\Phi$ as  just a linear map on $\hol{\rhp}$, the space of all holomorphic functions on $\rhp$. For any complex number $a$ define $E_a(w) = w^a$, where $w^a=\exp(a\log w)$, and ``log'' denotes the principal branch of the logarithm. Then $E_a\in\hol{\rhp})$ and $C_\Phi(E_a) = \mu^a E_a$, i.e., $E_a$ is an eigenvector of $C_\Phi$ (acting on $\hol{\rhp}$) and the corresponding eigenvalue is $\mu^a$ (again taking the principal value of the ``$a$-th power'').  Upon returning via the Cayley map to the unit disc, we see that, when viewed as a linear transformation of $\hol{\U}$, the operator $\cphi$ has, for each $a\in\C$, the eigenvector/eigenvalue combination $(f_a, \mu^a)$, where the function
\begin{equation} \label{eigenfunction_defn}
     f_a(z) = \left(\frac{1+z}{1-z}\right)^a \quad\qquad (z\in\U)
\end{equation}
belongs to $\htwo$ if and only if $|\re(a)| < 1/2$. Thus the corresponding $\htwo$-eigenvalues $\mu^a$ cover the entire open annulus 
\begin{equation} \label{spectral_annulus}
    A:= \{\lambda\in\C: \frac{1}{\sqrt{\mu}}<|\lambda|<\sqrt{\mu}\}.
\end{equation}
In particular $\sigma(\cphi)$, the $\htwo$-spectrum of $\cphi$, contains this annulus, and since the map $a\goesto \mu^a$ takes the strip $|\re(a)| < 1/2$ infinitely-to-one onto $A$, each point of $A$ is an eigenvalue of $\cphi$ having infinite multiplicity.

As for the rest of the spectrum, an elementary norm calculation shows that $\sigma(\cphi)$ is just the closure of $A_\mu$. To see this, note first that  the change-of-variable formula from calculus shows that for each $f\in\htwo$ and each automorphism $\phi$ of $\U$ (not necessarily hyperbolic):
\begin{equation}\label{ch_var}
    \norm{\cphi f}^2 = \intbdu |f|^2 P_a \, dm
\end{equation}
where $m$ is normalized arc-length measure on the unit circle $\bdu$, and $P_a$ is the Poisson kernel for $a=\phi(0)$; more generally, for any $a\in\U$: 
\begin{equation} \label{poisson_kernel_formula}
  P_a(\zeta) ~=~ \frac{1-|a|^2}{|\zeta - a|^2}  \qquad (\zeta\in\bdu) \, 
\end{equation}
(see also Nordgren's neat argument \cite[Lemma 1, page 442]{Nor}, which shows via Fourier analysis that \eqref{ch_var} holds for any inner function).

Now suppose $\phi$ is the canonical hyperbolic automorphism of $\U$ with multiplier $\mu>1$.   Then $\phi$ is given by \eqref{canonical_auto}, so by \eqref{poisson_kernel_formula}
$$
     P_r(\zeta) ~=~ \frac{1-r^2}{|\zeta-r|^2} ~\le~ \frac{1+r}{1-r} ~=~ \mu
$$
which, along with \eqref{ch_var} shows that 
\begin{equation}  \label{norm_ineq}
    \norm{\cphi} ~\le~ \sqrt{\mu} \, .
\end{equation}
Since also
$$
    P_r(\zeta) ~\ge~ \frac{1-r}{1+r} ~=~ \mu^{-1}
$$
we have, for each $f\in\htwo$
$$
    \norm{\cphi f} ~\ge~ \frac{1}{\sqrt{\mu}} \, \norm{f},
$$
which shows that \eqref{norm_ineq}  holds with $\cphi$ replaced by  $\cphi^{-1}$.  Thus the spectra of both $\cphi$ and its inverse lie in the closed disc of radius $\sqrt{\mu}$ centered at the origin, so by the spectral mapping theorem, 
    $\sigma(\cphi)$ is contained in the closure of the annulus \eqref{spectral_annulus}. Since we have already seen that this closed annulus contains the spectrum of $\cphi$ we've established the following result, first proved by Nordgren \cite[Theorem 6, page 448]{Nor} using precisely the argument given above:
  
  \begin{thm}  \label{spectrum_thm} 
   If $\phi$ is a hyperbolic automorphism of $\U$ with multiplier $\mu ~(>1)$, then $\sigma(\cphi)$ is the closed annulus
   $
  \{\lambda\in\C:1/\sqrt{\mu}\le |\lambda|\le \sqrt{\mu}\}.
   $
   The interior of this annulus consists entirely of eigenvalues of $\cphi$, each having infinite multiplicity.
  \end{thm}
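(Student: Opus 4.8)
The plan is to establish the two assertions by sandwiching: first produce eigenvalues filling the open annulus $A$ of \eqref{spectral_annulus}, then confine all of $\sigma(\cphi)$ to $\overline A$ by elementary operator-norm estimates, so that, $\sigma(\cphi)$ being closed, equality is forced. To begin, since every hyperbolic automorphism is conjugate, through a disc automorphism $\psi$, to a canonical one $\phi_0$, the operator $\cphi$ is similar, via $C_\psi$, to $C_{\phi_0}$; similarity preserves the spectrum as well as the point spectrum with multiplicities, so I may assume $\phi$ is canonical, given by \eqref{canonical_auto}.

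For the eigenvalues I would transfer to $\rhp$ by the Cayley map, where $\cphi$ becomes similar to $C_\Phi$ with $\Phi(w)=\mu w$: on $\hol{\rhp}$ the power function $E_a(w)=w^a$ satisfies $C_\Phi E_a=\mu^a E_a$, and pulling back yields the functions $f_a$ of \eqref{eigenfunction_defn}. The one genuinely computational step is to pin down when $f_a\in\htwo$: writing $z=e^{it}$ one finds $(1+z)/(1-z)=i\cot(t/2)$, which is purely imaginary, so $|f_a(e^{it})|$ is $|\cot(t/2)|^{\re a}$ times a factor bounded in $t$, whence $\norm{f_a}^2$ is comparable to $\intpi|\cot(t/2)|^{2\re a}\,dt$. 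This integral behaves like $\int|t|^{-2\re a}\,dt$ near $t=0$ and like $\int|t\mp\pi|^{2\re a}\,dt$ near $t=\pm\pi$, so it converges exactly when $|\re a|<1/2$; thus $f_a\in\htwo$ precisely for such $a$, and then $(f_a,\mu^a)$ is an $\htwo$-eigenpair. As $a$ runs over the strip $|\re a|<1/2$ the value $\mu^a=e^{a\log\mu}$ sweeps $A$, so $A\subseteq\sigma(\cphi)$; moreover the $a$'s over a fixed $\lambda\in A$ form a progression $a_0+(2\pi i/\log\mu)\Z$, and the eigenfunctions $f_{a_0+(2\pi i/\log\mu)k}$ are linearly independent—transported to $\rhp$ they restrict on the positive real axis to the exponentials $u\mapsto e^{a_ku}$ with distinct exponents—so each $\lambda\in A$ is an eigenvalue of infinite multiplicity.

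To trap the spectrum, I would invoke the change-of-variable formula \eqref{ch_var} with $a=\phi(0)=r$, giving $\norm{\cphi f}^2=\intbdu|f|^2P_r\,dm$, together with the two-sided Poisson bound $\mu^{-1}=(1-r)/(1+r)\le P_r\le(1+r)/(1-r)=\mu$ read off from \eqref{poisson_kernel_formula}. The upper bound gives $\norm{\cphi}\le\sqrt\mu$ as in \eqref{norm_ineq}, and the lower bound gives $\norm{\cphi f}\ge\mu^{-1/2}\norm f$ for all $f$, i.e. $\norm{\cphi^{-1}}\le\sqrt\mu$ (using that $\cphi$ is invertible). Hence both $\sigma(\cphi)$ and $\sigma(\cphi^{-1})=\{1/\lambda:\lambda\in\sigma(\cphi)\}$ (spectral mapping theorem) lie in the closed disc of radius $\sqrt\mu$, which forces $1/\sqrt\mu\le|\lambda|\le\sqrt\mu$ for every $\lambda\in\sigma(\cphi)$. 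Since $\sigma(\cphi)$ is closed and contains the open annulus $A$, it must equal $\overline A=\{1/\sqrt\mu\le|\lambda|\le\sqrt\mu\}$, and the infinite-multiplicity claim for the interior was settled above. I expect the sharp $\htwo$-membership criterion $|\re a|<1/2$ for $f_a$—the boundary-growth estimate—to be the one nontrivial point; the rest (similarity invariance of spectra, the Poisson bounds, the spectral mapping theorem, and linear independence of exponentials with distinct exponents) is soft.
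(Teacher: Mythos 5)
Your proposal is correct and follows essentially the same route as the paper: Cayley transfer to the half-plane to produce the eigenfunctions $f_a$ for $|\re a|<1/2$, then the change-of-variable formula with the two-sided Poisson kernel bound plus the spectral mapping theorem to confine $\sigma(\cphi)$ to the closed annulus. The only additions are details the paper leaves implicit (the boundary computation showing $f_a\in\htwo$ iff $|\re a|<1/2$, and the linear independence of the $f_{a_0+(2\pi i/\log\mu)k}$ for the infinite-multiplicity claim), and both are handled correctly.
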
 
In fact the interior of $\sigma(\cphi)$ is precisely the point spectrum of $\cphi$; see \cite{Mat3} for the details.

\subsection{Poisson kernel estimates} \label{poisson_estimate}
Formula \eqref{poisson_kernel_formula}, giving the  Poisson kernel for the point $a=\rho e^{i\theta_0} \in\U$, can be rewritten
$$
    P_a(\eith) ~=~ \frac{1-\rho^2}{1-2\rho\cos(\theta-\theta_0) + \rho^2}
       \qquad (0\le\rho<1, \theta\in\R) \, .
$$
We will need the following well-known estimate, which provides a convenient replacement (cf. for example \cite[page 313]{A}).
\begin{lem} \label{poisson_est_lem}
For $0\le\rho<1$ and $|\theta|\le\pi$:
  \begin{equation} \label{poisson_int_est}
      P_\rho(\eith) ~\le~  4 \, \frac{(1-\rho)}{(1-\rho)^2+(\theta/\pi)^2}
   \end{equation} 
\end{lem}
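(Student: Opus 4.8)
The plan is to prove the Poisson kernel estimate \eqref{poisson_int_est} by direct comparison of the two elementary rational functions of $\cos\theta$ involved, reducing everything to a single inequality for the trigonometric quantity $1-\cos\theta$ against $(\theta/\pi)^2$. First I would recall the closed form $P_\rho(\eith) = (1-\rho^2)/(1-2\rho\cos\theta+\rho^2)$ and rewrite its denominator as $(1-\rho)^2 + 2\rho(1-\cos\theta)$. Since $1-\rho^2 = (1-\rho)(1+\rho) \le 2(1-\rho)$, it suffices to show
\begin{equation*}
  \frac{2(1-\rho)}{(1-\rho)^2 + 2\rho(1-\cos\theta)} ~\le~ \frac{4(1-\rho)}{(1-\rho)^2 + (\theta/\pi)^2},
\end{equation*}
and after cancelling the common factor $(1-\rho)$ and clearing denominators this is equivalent to
\begin{equation*}
  (1-\rho)^2 + (\theta/\pi)^2 ~\le~ 2(1-\rho)^2 + 4\rho(1-\cos\theta),
\end{equation*}
i.e. to $(\theta/\pi)^2 \le (1-\rho)^2 + 4\rho(1-\cos\theta)$.

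The key step is therefore the elementary inequality $(\theta/\pi)^2 \le 4\rho(1-\cos\theta)$ for $|\theta|\le\pi$ and $\rho$ bounded below, but since $\rho$ can be arbitrarily close to $0$ that form cannot hold uniformly; this is exactly why the $(1-\rho)^2$ term is present. So instead I would split into two cases according to the size of $\rho$. When $\rho \le \frac12$, I would simply bound $(1-\rho)^2 \ge \frac14$ and use $|\theta|\le\pi$ to get $(\theta/\pi)^2 \le 1$; a slightly more careful bookkeeping (keeping the full right-hand side $(1-\rho)^2 + 4\rho(1-\cos\theta)$ and noting $1-\cos\theta \ge 0$) handles this range, perhaps after checking that the constant $4$ is generous enough. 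When $\rho \ge \frac12$, I would discard the $(1-\rho)^2$ term and prove $(\theta/\pi)^2 \le 4\rho(1-\cos\theta)$, which follows from $4\rho \ge 2$ together with the standard half-angle estimate $1-\cos\theta = 2\sin^2(\theta/2) \ge 2(\theta/\pi)^2$, valid because $|\sin t| \ge |t|/(\pi/2) \cdot \tfrac{\pi}{2}\cdot\tfrac{2}{\pi}$—more precisely, $\sin t \ge \tfrac{2}{\pi}t$ for $0\le t\le \pi/2$ (Jordan's inequality), applied with $t=|\theta|/2$.

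The main obstacle I anticipate is not any deep idea but simply getting the case split and the numerical constants to align so that the single constant $4$ works in both ranges simultaneously; it is tempting to optimize and discover one needs to treat the crossover region more delicately, or to absorb a stray factor of $2$. A clean way to sidestep fiddly optimization is to prove the cruder but sufficient two-sided bound
\begin{equation*}
  (1-\rho)^2 + (\theta/\pi)^2 ~\le~ 2\bigl[(1-\rho)^2 + 2\rho(1-\cos\theta)\bigr]
\end{equation*}
whenever $(\theta/\pi)^2 \le (1-\rho)^2 + 4\rho(1-\cos\theta)$, and then verify the latter unconditionally using Jordan's inequality in the form $1-\cos\theta \ge 2(\theta/\pi)^2$ on $[-\pi,\pi]$: indeed if $\rho\ge\tfrac12$ then $4\rho(1-\cos\theta)\ge 2(1-\cos\theta)\ge 4(\theta/\pi)^2 \ge (\theta/\pi)^2$, while if $\rho\le\tfrac12$ then $(1-\rho)^2\ge\tfrac14$ and $(\theta/\pi)^2\le 1$ forces... so in fact one only needs $(\theta/\pi)^2 \le 1 \le 4(1-\rho)^2$ which fails near $\rho=\tfrac12$, confirming that the case boundary should be chosen with a little slack (say at $\rho = 3/4$) and the constant $4$ in \eqref{poisson_int_est} then comfortably covers both pieces. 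I would finish by remarking that equality is never attained, so the estimate is strict, though only ``$\le$'' is claimed.
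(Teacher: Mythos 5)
Your reduction is essentially the paper's: rewrite the denominator as $(1-\rho)^2+2\rho(1-\cos\theta)=(1-\rho)^2+4\rho\sin^2(\theta/2)$, use $1-\rho^2\le 2(1-\rho)$, and bound $\sin^2(\theta/2)$ below by $(\theta/\pi)^2$ via Jordan's inequality. The paper runs into the same obstruction you do --- this only closes the argument for $\rho$ bounded away from $0$ (it uses the threshold $\rho\ge\frac14$, where it even gets the constant $2$) --- and it disposes of small $\rho$ by a separate elementary comparison: over $[0,\pi]$ the minimum of the right-hand side of the claimed estimate exceeds the maximum $\frac{1+\rho}{1-\rho}$ of the left-hand side. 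The one genuine defect in your write-up is that your small-$\rho$ case is left unresolved, and the concrete suggestion you do make --- moving the case boundary up to $\rho=3/4$ --- points the wrong way: raising the threshold only enlarges the range on which you are trying to get by with $(\theta/\pi)^2\le(1-\rho)^2$ alone, and that inequality fails at $\theta=\pm\pi$ for every $\rho>0$, so no choice of threshold rescues that route.

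The correct fix is to keep both terms of $(1-\rho)^2+4\rho(1-\cos\theta)$ instead of discarding one. Jordan's inequality gives $4\rho(1-\cos\theta)\ge 8\rho(\theta/\pi)^2$, so with $t=(\theta/\pi)^2\in[0,1]$ your target inequality $(\theta/\pi)^2\le(1-\rho)^2+4\rho(1-\cos\theta)$ follows from $(1-\rho)^2\ge(1-8\rho)\,t$. For $\rho\ge\frac18$ the right side is $\le 0$ and there is nothing to prove; for $\rho<\frac18$ the worst case $t=1$ asks for $(1-\rho)^2\ge 1-8\rho$, i.e.\ $\rho^2+6\rho\ge 0$, which is trivially true. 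So your key inequality in fact holds unconditionally, no case split is needed, and this finish is arguably cleaner than the paper's max/min comparison. (The closing remark that equality is never attained plays no role and can be dropped.)
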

\begin{proof}
 $$
     P_\rho(\eith) 
          := \frac{1-\rho^2}{1-2\rho\cos\theta+\rho^2}
          = \frac{1-\rho^2}{(1-\rho)^2+\rho(2\sin\frac{\theta}{2})^2} 
          \le \frac{2(1-\rho)}{(1-\rho)^2+4\rho(\theta/\pi)^2}    
 $$           
so, at least when $\rho\ge \frac{1}{4}$, inequality \eqref{poisson_int_est} holds with constant ``2'' in place of ``4''.  For the other values of $\rho$ one can get inequality \eqref{poisson_int_est} by checking that, over the interval $[0,\pi]$,  the minimum of the right-hand side exceeds the maximum of the left-hand side. 
\end{proof}
\begin{rem} \label{constant_remark}{\em
The only property of the constant ``4'' on the right-hand side of \eqref{poisson_int_est}  that matters for our purposes is its independence of $\rho$ and $\theta$.  
}
\end{rem}
For the sequel  (especially Theorem \ref{main_thm_2} below) we will require the following upper estimate of certain infinite sums of Poisson kernels. 

\begin{lem} \label{sum_estimate}
For $\phi$ the canonical hyperbolic automorphism of $\U$ with multiplier $\mu$:
\begin{equation} \label{poisson_sum_estimate}
  \infsum P_{\phi_n(0)}(\eith) 
               ~\le~ \frac{16\mu}{\mu-1} \, \frac{\pi}{|\theta|}
                  \qquad (|\theta|\le \pi) \,.
\end{equation}
\end{lem}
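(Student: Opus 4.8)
The plan is to reduce inequality \eqref{poisson_sum_estimate} to the summation of two geometric series via Lemma \ref{poisson_est_lem}, the essential point being \emph{not} to sum the crude per-term bound for each $\Prn(\eith)$ (that would only produce an estimate of order $1/\theta^2$, which is far too weak near $\theta=0$).

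First I would identify the points $\phi_n(0)$. Since $\phi$ is canonical, each iterate $\phi_n$ ($n\ge 1$) is again a canonical hyperbolic automorphism, now with multiplier $\mu^n$, so formula \eqref{canonical_auto} gives $r_n := \phi_n(0) = (\mu^n-1)/(\mu^n+1)$ (this also holds trivially for $n=0$). Thus each $r_n$ is real, lies in $[0,1)$, and satisfies $1-r_n = 2/(\mu^n+1)$; in particular $\mu^{-n} < 1-r_n < 2\mu^{-n}$, and $1-r_n$ decreases in $n$ from $1-r_0=1$ down to $0$. Applying Lemma \ref{poisson_est_lem} with $\rho = r_n$ then gives, for $|\theta|\le\pi$,
$$\Prn(\eith) \ \le\ \frac{4(1-r_n)}{(1-r_n)^2 + (\theta/\pi)^2}.$$

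Now set $s = |\theta|/\pi \in (0,1]$ (the case $\theta = 0$ being trivial, as the right-hand side of \eqref{poisson_sum_estimate} is then infinite), and let $N\ge 0$ be the largest integer with $1-r_N \ge s$; such an $N$ exists since $1-r_0 = 1\ge s$ and $1-r_n\goesto 0$. I would split $\infsum\Prn(\eith)$ at $N$. For $n\le N$ we have $1-r_n \ge 1-r_N\ge s$, so the $n$-th term above is at most $4/(1-r_n) < 4\mu^n$; summing the geometric series and using $\mu^N < 2/s$ (which follows from $s\le 1-r_N < 2\mu^{-N}$) bounds this part of the sum by $4\mu^{N+1}/(\mu-1) < 8\mu/[(\mu-1)s]$. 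For $n > N$ we have $1-r_n < s$, so the $n$-th term is at most $4(1-r_n)/s^2 < 8\mu^{-n}/s^2$; summing and using $\mu^{-N} < \mu s$ (which follows from $\mu^{-(N+1)} < 1-r_{N+1} < s$) bounds this part by $8\mu^{-N}/[(\mu-1)s^2] < 8\mu/[(\mu-1)s]$. Adding the two pieces gives $\infsum\Prn(\eith) \le 16\mu/[(\mu-1)s] = \frac{16\mu}{\mu-1}\cdot\frac{\pi}{|\theta|}$, which is \eqref{poisson_sum_estimate}.

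The only genuine idea here is the choice of the splitting index $N$, which matches the ``width'' $1-r_n$ of the $n$-th Poisson kernel against the target point $|\theta|/\pi$: below $N$ the kernels are still ``wide'' and one uses the bound $\Prn(\eith)\lesssim 1/(1-r_n)$, while above $N$ they are ``narrow'' at $\eith$ and one uses $\Prn(\eith)\lesssim (1-r_n)/s^2$; in both regimes the geometric decay of $1-r_n\asymp\mu^{-n}$ makes the series converge to something of size $\mu^N\asymp 1/s$ (respectively $\mu^{-N}\asymp s$) times a constant. Once the split is made, everything is elementary. The one mild nuisance is carrying the constants honestly through both halves of the split so that they combine to exactly the stated $16\mu/(\mu-1)$ — though, by Remark \ref{constant_remark}, it is really only the $\mu$-dependence and the $1/|\theta|$ behaviour that matter for the later applications.
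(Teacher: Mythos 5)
Your proof is correct and follows essentially the same route as the paper's: both apply the per-term bound of Lemma \ref{poisson_est_lem}, split the sum at an index $N$ where the kernel width $1-r_n\asymp\mu^{-n}$ crosses $|\theta|/\pi$ (you define $N$ implicitly via $1-r_N\ge|\theta|/\pi$, the paper via $N\approx\log_\mu(\pi/|\theta|)$, which is the same up to $O(1)$), and sum a geometric series in each regime. The constants are carried through correctly and match the stated $16\mu/(\mu-1)$.
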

\noindent  In the spirit of Remark \ref{constant_remark} above,  the precise form of the positive constant that multiplies $\pi/|\theta|$  on the right-hand side of  \eqref{poisson_sum_estimate} is unimportant (as long as it does not depend on $\theta$). 

\begin{proof}
 The automorphism $\phi$ is  given by equations \eqref{canonical_auto}. For each integer $n\ge 0$ the $n$-th iterate $\phi_n$ of $\phi$ is just the canonical hyperbolic automorphism with multiplier $\mu^n$, so upon substituting $\mu^n$ for $\mu$ in \eqref{canonical_auto} we obtain
\begin{equation} \label{nth_iterate}
  \phi_n(z) = \frac{r_n+z}{1+r_nz} \qquad{\rm where} \qquad 
                  \phi_n(0) = r_n=\frac{\mu^n-1}{\mu^n+1} \in (0,1).
\end{equation}
Thus  $1-r_n = 2/(\mu^n+1)$,  and so
\begin{equation} \label{mu_power_est}
    \mu^{-n} ~<~ 1-r_n ~<~ 2 \, \mu^{-n}  \quad (n=0, 1, 2, ...),
\end{equation}
(in particular, $r_n$ approaches the attractive fixed point $+1$ with exponential speed as $n\goesto\infty$;  this is true of the $\phi$-orbit of any point of the unit disc). 

Fix $\theta\in[-\pi,\pi]$. We know from \eqref{poisson_int_est} and \eqref{mu_power_est}  that for each integer $n\ge 0$,
$$
   \Prn(\eith) ~\le~ \frac{4(1-r_n)}{(1-r_n)^2 + (\theta/\pi)^2}
                   ~\le~ \frac{8\mu^{-n}}{\mu^{-2n}+(\theta/\pi)^2}~,
$$
 whereupon, for each non-negative integer $N$:
 \begin{eqnarray*}
    \frac{1}{8} \infsum \Prn(\eith)  
          &\le& \infsum \frac{\mu^{-n}}{\mu^{-2n} + (\theta/\pi)^2} \\
          & &\\
          &\le& \sum_{n=0}^{N-1}\frac{\mu^{-n}}{\mu^{-2n}}
                       ~+~ \left(\frac{\pi}{\theta}\right)^2 
                                  \sum_{n=N}^\infty\mu^{-n} \\
                                  & &\\
           &=& \sum_{n=0}^{N-1} \mu^n
                               ~+~  \left(\frac{\pi}{\theta}\right)^2  
                              \sum_{n=N}^\infty \mu^{-n}   \\
             & &\\               
            &=& \frac{\mu^N - 1}{\mu - 1}  
                            ~+~ 
                          \left(\frac{\pi}{\theta}\right)^2 \mu^{-N} (1-\mu^{-1})^{-1}                                                        
 \end{eqnarray*}
where the geometric sum in the next-to-last line converges because $\mu>1$.

We need a choice of  $N$ that gives a favorable value for the quantity in the last line of the display above. Let $\nu = \log_\mu(\pi/|\theta|)$, so that $\mu^\nu = \pi/|\theta|$. Since $|\theta|\le\pi$ we are assured that $\nu\ge 0$. Let $N$ be the least integer $\ge \nu$, i.e., the unique integer in the interval $[\nu,\nu+1)$. 
The above estimate yields for any integer $N\ge 0$, upon setting $C:=8\mu/(\mu-1)$ (which is $>0$ since $\mu>1$),
\begin{equation} \label{first_sum_est}
  \infsum \Prn(\eith)     
       \le C\left[\mu ^{N-1} +  \left(\frac{\pi}{\theta}\right)^2 \mu^{-N} \right]  
      \le C  \frac{\pi}{|\theta|}
                     \left[\frac{|\theta|}{\pi}\mu^\nu 
                                + \left(\frac{|\theta|}{\pi}\mu^\nu\right)\inv\right].
\end{equation}
  By our choice of $\nu$,  both summands in the square-bracketed term at the end of \eqref{first_sum_est}   have the value 1  and this implies \eqref{poisson_sum_estimate}.
\end{proof}

\section{Main results} \label{main_results}
Here I extend work of Chkliar \cite{Chk} and Matache \cite{Mat2} that provides, for a hyperbolic-automorphically induced composition operator $\cphi$,  sufficient conditions on $f\in\htwo$ for the doubly-cyclic subspace $D_f$, as defined by (\ref{df_defn}), to contain  a rich supply of linearly independent eigenfunctions.  I'll focus mostly on canonical hyperbolic automorphisms, leaving the general case for the next section. Thus, until further notice, $\phi$ will denote a canonical hyperbolic automorphism of $\U$ with multiplier $\mu>1$, attractive fixed point at $+1$ and  repulsive one at $-1$, i.e., $\phi$ will be given by equations \eqref{canonical_auto}. 

Following both Chkliar and Matache, I will use an $\htwo$-valued Laurent series to produce the desired eigenvectors. The idea is this: for $f\in\htwo$, and $\lambda$ a non-zero complex number, if the series
\begin{equation} \label{eigen_series}
  \sum_{n\in\Z} \lambda^{-n} (f\circ\phi_n)
\end{equation}
converges strongly enough (for example, in $\htwo$) then the sum $F_\lambda$, whenever it is not the zero-function, will be a $\lambda$-eigenfunction of $\cphi$  that lies in $D_f$. Clearly the convergence of the series \eqref{eigen_series} will depend crucially on the behavior of $\phi$ at its fixed points, as the next result indicates. For convenience let's agree to denote by $A(R_1,R_2)$ the open annulus, centered at the origin, of inner radius $R_1$ and outer radius $R_2$ (where, of course, $0<R_1<R_2<\infty$).

\begin{thm} \label{main_thm_1} {\rm (cf. \cite{Chk})}
Suppose $0<\eps,\delta\le 1/2$, and that
$$
f\in (z-1)^{\frac{1}{2}+\eps} (z+1)^{\frac{1}{2}+\delta}\htwo\backslash\{0\}.
$$ 
Then $\sigma_p(\cphidf)$ contains, except possibly for a discrete subset, $A(\mu^{-\eps},\mu^\delta)$.
\end{thm}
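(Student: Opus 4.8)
The strategy is exactly the one foreshadowed in the text: for $\lambda$ in the annulus $A(\mu^{-\eps},\mu^\delta)$, form the $\htwo$-valued Laurent series
$$
  F_\lambda = \sum_{n\in\Z} \lambda^{-n}\,(f\circ\phi_n),
$$
show it converges in $\htwo$-norm, observe that $C_\phi F_\lambda = \lambda F_\lambda$ and $F_\lambda\in D_f$, and finally rule out $F_\lambda\equiv 0$ for all but a discrete set of $\lambda$. First I would record the factorization hypothesis as $f = (z-1)^{1/2+\eps}(z+1)^{1/2+\delta}g$ with $g\in\htwo$. The key point is to estimate $\norm{f\circ\phi_n}$ as $n\to\pm\infty$. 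Using the change-of-variable formula \eqref{ch_var}, $\norm{f\circ\phi_n}^2 = \intbdu |f|^2 P_{\phi_n(0)}\,dm$, and the factor $|f|^2$ vanishes like $|{\zeta-1}|^{1+2\eps}$ near the attractive fixed point $+1$ and like $|{\zeta+1}|^{1+2\delta}$ near $-1$. Since $\phi_n(0)=r_n\to 1$ with $1-r_n\asymp\mu^{-n}$ as $n\to+\infty$ (equation \eqref{mu_power_est}), the Poisson kernel $P_{r_n}$ concentrates near $+1$, where $f$ is small; a direct computation with Lemma \ref{poisson_est_lem} should give $\norm{f\circ\phi_n}^2 = \bigoh(\mu^{-2\eps n})$, i.e. $\norm{f\circ\phi_n} = \bigoh(\mu^{-\eps n})$ as $n\to+\infty$, and symmetrically (running $\phi^{-1}$, whose attractive fixed point is $-1$) $\norm{f\circ\phi_n} = \bigoh(\mu^{\delta n})$ as $n\to-\infty$.

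**Convergence and the eigenvalue relation.** Granting those two decay estimates, the series $\sum_{n\ge 0}\lambda^{-n}\norm{f\circ\phi_n}$ converges whenever $|\lambda| > \mu^{-\eps}$, and $\sum_{n<0}|\lambda|^{-n}\norm{f\circ\phi_n}$ converges whenever $|\lambda|<\mu^{\delta}$; so for $\lambda\in A(\mu^{-\eps},\mu^{\delta})$ the two-sided series converges absolutely in $\htwo$, defining $F_\lambda\in D_f$. Because $C_\phi$ is bounded and invertible with $C_\phi(f\circ\phi_n) = f\circ\phi_{n+1}$, applying $C_\phi$ term-by-term and reindexing gives $C_\phi F_\lambda = \lambda F_\lambda$; thus wherever $F_\lambda\neq 0$ we have $\lambda\in\sigma_p(\cphidf)$.

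**Nonvanishing — the main obstacle.** The real work is showing $F_\lambda\not\equiv 0$ except on a discrete set of $\lambda$. The clean way is to view $\lambda\mapsto F_\lambda$ as an $\htwo$-valued holomorphic function on the annulus: its Laurent coefficients are the nonzero vectors $f\circ\phi_n$, and the decay estimates above show it is genuinely analytic on $A(\mu^{-\eps},\mu^{\delta})$. An $\htwo$-valued analytic function on a connected open set is either identically zero or has a zero set with no accumulation point in that set; so it suffices to exhibit a single $\lambda_0$ in the annulus with $F_{\lambda_0}\neq 0$. For this I would pair $F_\lambda$ against a suitable functional — for instance evaluation-type functionals, or inner products with the eigenfunctions $f_a$ of \eqref{eigenfunction_defn} — and show the resulting scalar analytic function is not identically zero; alternatively one can compute $\langle F_\lambda, h\rangle$ for $h$ a reproducing kernel at a point near $\pm 1$ and extract an explicit non-vanishing at some radius. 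I expect the delicate book-keeping to lie in (i) making the Poisson-kernel integral estimate rigorous near the \emph{other} fixed point (where $P_{r_n}$ is bounded but one still needs the $|{\zeta+1}|^{1+2\delta}$ factor to control the contribution), and (ii) justifying that $\lambda\mapsto F_\lambda$ is analytic with the claimed Laurent expansion — which follows once the norm estimates are in hand, since they give locally uniform convergence of the series. Once a single non-trivial $F_{\lambda_0}$ is produced, the discreteness of the exceptional set is automatic from the vector-valued identity theorem, completing the proof.
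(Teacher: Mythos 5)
Your strategy coincides with the paper's: the same $\htwo$-valued Laurent series $F_\lambda=\sum_{n\in\Z}\lambda^{-n}(f\circ\phi_n)$, the same target decay rates $\norm{f\circ\phi_n}=\bigoh(\mu^{-n\eps})$ as $n\to+\infty$ and $\bigoh(\mu^{n\delta})$ as $n\to-\infty$, the same reduction of the negative indices to the positive ones via the symmetry $\phi_{-n}(z)=-\phi_n(-z)$, and the same appeal to the identity theorem for vector-valued analytic functions. Two steps, however, are left unexecuted, and one of them is where the actual content of the theorem lives. The norm estimate is only asserted (``a direct computation \dots should give''), and the heuristic you offer for it---that $|f|^2$ vanishes like $|\zeta-1|^{1+2\eps}$ near $+1$---is not literally available, since the cofactor $g\in\htwo$ need not be bounded; all the hypothesis yields is integrability of $|f(\eith)|^2/|\theta|^{1+2\eps}$. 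The paper converts that integrability into the decay rate by applying Lemma \ref{poisson_est_lem}, then writing the integrand as $\frac{|f(\eith)|^2}{|\theta|^{1+2\eps}}\cdot\frac{(\mu^n|\theta|)^{1+2\eps}}{1+(\mu^n|\theta|)^2}$ and bounding the second factor by $\sup_{x\in\R}|x|^{1+2\eps}/(1+x^2)$, which is finite precisely because $\eps\le 1/2$. Your sketch never invokes the hypothesis $\eps,\delta\le 1/2$, which is a sign that this step has not really been done.

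The second point is the non-vanishing of $F_\lambda$, which you call ``the main obstacle'' and propose to settle by producing one explicit $\lambda_0$ with $F_{\lambda_0}\neq 0$ via pairing against reproducing kernels---a computation you do not carry out. But you have already written down the fact that makes this unnecessary: the Laurent coefficients of $\lambda\mapsto F_\lambda$ are the functions $f\circ\phi_n$, none of which is zero because $f\neq 0$ and each $\phi_n$ is an automorphism. If $F_\lambda$ vanished on a subset of the annulus with an accumulation point there, then for every bounded linear functional $\Lambda$ the scalar Laurent series $\sum_{n\in\Z}\lambda^{-n}\Lambda(f\circ\phi_n)$ would vanish identically, forcing $\Lambda(f\circ\phi_n)=0$ for all $n$ and all $\Lambda$, hence $f\circ\phi_n=0$---a contradiction. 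That is exactly the paper's one-line disposal of this step; no explicit $\lambda_0$ is needed, and the ``main obstacle'' dissolves.
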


\begin{proof}
Our hypothesis on the behavior of $f$ at the point $+1$ (the attractive fixed point of $\phi$) is that $f=(z-1)^{\frac{1}{2}+\eps}g$ for some $g\in\htwo$, i.e., that
\begin{equation} \label{boundary_hypothesis}
  \infty ~>~ \intbdu |g|^2dm ~=~  \intbdu 
               \frac{|f(\zeta)|^2}{|\zeta-1|^{2\eps+1}}\,dm(\zeta) 
         ~\ge~ \invtwopi \intpi 
                      \frac{|f(\eith)|^2}{~~|\theta|^{2\eps+1}} \, d\theta \, .
\end{equation} 
Upon setting $a=\phi_n(0) := r_n$ in  \eqref{ch_var} we obtain
  \begin{equation} \label{ch_var_htwo}
      \norm{f\circ\phi_n}^2 ~=~ \int |f|^2 P_{r_n}\,dm \, ,
                 \qquad (n\in\Z)
  \end{equation}
which combines with estimates \eqref{poisson_int_est} and \eqref{mu_power_est} to show that if $n$ is a non-negative integer (thus insuring that $r_n > 0$):
\begin{eqnarray*}
    \norm{f\circ\phi_n}^2 
          &\le &  2\pi \intpi
                       |f(\eith)|^2 \frac{1-r_n}{(1-r_n)^2+\theta^2} \,d \theta \\
           & &\\            
          &\le& 4\pi\intpi  |f(\eith)|^2
                         \frac{\mu^{-n}}{\mu^{-2n}+\theta^2} \, d\theta      \\
           & &\\              
          &=& 4\pi \mu^{-2n\eps} \intpi 
                               \frac{|f(\eith)|^2}{|\theta|^{1+2\eps}} \, 
                \left\{\frac{(\mu^n|\theta|)^{1+2\eps}}{1+(\mu^n|\theta|)^2}
                                             \right\} \, d\theta\\
            & &\\                                 
           &\le& 4\pi  \mu^{-2n\eps}  
                    \sup_{x\in\R}\left\{ \frac{|x|^{1+2\eps}}{1+x^2}\right\}  
                         \intpi \frac{|f(\eith)|^2}{|\theta|^{1+2\eps}} \, d\theta  ~.     
\end{eqnarray*}
By \eqref{boundary_hypothesis} the integral in the last line is finite, and since $0<\eps\le 1/2$, the supremum in that line is also finite. Thus 
$$
     \norm{f\circ\phi_n} ~=~ \bigoh(\mu^{-n\eps}) 
                \quad {\rm as}\quad n\goesto\infty,
$$
which guarantees that the subseries of \eqref{eigen_series} with positively indexed terms converges in $\htwo$ for all $\lambda\in\C$ with $|\lambda| > \mu^{-\eps}$. 

As for the negatively indexed subseries of \eqref{eigen_series},  note from \eqref{canonical_auto} that $\phi\inv(z) = -\phi(-z)$, so  $\phi_{-n}(z) = -\phi_n(-z)$ for each integer $n$. Let $g(z) = f(-z)$, so our hypothesis on $f$ implies that $g\in (z-1)^{\onehalf+\delta}\,\htwo\backslash\{0\}$. Let $\psi_n(z) = \phi_n(-z)$ (the subscript on $\psi$ does not now indicate iteration). Then for each positive integer $n$ we have $\psi_n(0)=\phi_n(0) = r_n$, hence:
$$
    \norm{f\circ\phi_{-n}}^2 
           ~=~ \norm{g\circ\psi_n}^2 ~=~ \intbdu |g|^2 \Prn \, dm
$$
so by the result just obtained, with $g$ in place of $f$ and $\eps$ replaced by $\delta$,
$$          
           \norm{f\circ\phi_{-n}} ~=~ \bigoh(\mu^{-n\delta})  
               \quad {\rm as} \quad n\goesto\infty.
$$  
Thus the negatively indexed subseries of \eqref{eigen_series}   converges in $\htwo$ for all complex numbers $\lambda$        with $|\lambda|<\mu^\delta$. 

Conclusion: For each $\lambda$ in the open annulus $A(\mu^{-\eps}, \mu^\delta)$ the $\htwo$-valued Laurent series \eqref{eigen_series} converges in the norm topology of $\htwo$ to a function $F_\lambda \in \htwo$. Now $F_\lambda$, for such a $\lambda$, will be a $\cphi$-eigenfunction unless it is the zero-function, and---just as for scalar Laurent series---this inconvenience can occur for at most a discrete subset of points $\lambda$ in the annulus of convergence (the relevant uniqueness theorem for $\htwo$-valued holomorphic functions follows easily from the scalar case upon applying bounded linear functionals).
\end{proof}

\begin{rem}{\em 
Chkliar \cite{Chk} has a similar result, where there are uniform conditions on the function $f$ at the fixed points of $\phi$ (see also Remark \ref{Chkliar_remark} below); as he suggests,  it would be of interest to know whether or not the ``possible discrete subset'' that clutters the conclusions of results like Theorem \ref{main_thm_1} can actually be nonempty. 
}
\end{rem}
\begin{rem}{\em
The limiting case $\delta=0$ of Theorem \ref{main_thm_1} still holds (see Theorem \ref{main_thm_3} below); it is a slight improvement on  Chkliar's result (see also the discussion following Theorem \ref{main_thm_3}).
}
\end{rem}
\begin{rem} {\em
Note that the restriction $\eps, \delta \le 1/2$ in the hypothesis of Theorem \ref{main_thm_1} cannot be weakened since, as mentioned at the end of \S\ref{spectra}, the point spectrum of  $\cphi$ is the open annulus $A(\mu^{-\onehalf}, \mu^\onehalf)$. 
}
\end{rem}

Here is a companion to Theorem \ref{main_thm_1}, which shows that even in the  limiting case $\delta=\eps=0$ (in some sense the ``weakest'' hypothesis on $f$) the operator $\cphidf$ still has a significant supply of eigenvalues. 

 \begin{thm} \label{main_thm_2}
 If $f\in\sqrt{(z+1)(z-1)} \, \htwo$ then $\sigma_p(\cphi|_{D_f})$ intersects $\bdu$ in a set of positive measure.
 \end{thm}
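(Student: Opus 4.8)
The plan is to run the $\htwo$-valued Laurent-series construction behind Theorem~\ref{main_thm_1} in its limiting case $\eps=\delta=0$. There the series \eqref{eigen_series} no longer converges in $\htwo$ for any individual $\lambda$, so instead I let $\lambda$ range over the unit circle, write $\lambda=\eith$, and settle for mean-square convergence in $\lambda$. The whole argument rests on a single estimate:
\[
  \sum_{n\in\Z}\norm{f\circ\phi_n}^2 ~<~ \infty .
\]
Granting it, $(f\circ\phi_n)_{n\in\Z}$ is a square-summable $\htwo$-valued sequence, so the trigonometric series $S(\theta):=\sum_{n\in\Z}e^{-in\theta}(f\circ\phi_n)$ converges in $L^2\big([-\pi,\pi];\htwo\big)$ by the Hilbert-space-valued form of Parseval's identity; hence its symmetric partial sums $S_N(\theta):=\sum_{|n|\le N}e^{-in\theta}(f\circ\phi_n)$ have a subsequence converging in $\htwo$ for almost every $\theta$, and I write $F_{\eith}$ for the resulting a.e.\ defined limit.

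To prove the estimate, write $f=\sqrt{(z+1)(z-1)}\,g$ with $g\in\htwo$; then on $\bdu$ one has $|f(\eith)|^2=|e^{2i\theta}-1|\,|g(\eith)|^2=2|\sin\theta|\,|g(\eith)|^2$. Using the change-of-variable identity \eqref{ch_var_htwo} and Tonelli's theorem,
\[
  \sum_{n\in\Z}\norm{f\circ\phi_n}^2 ~=~ \intbdu |f|^2\,\Big(\textstyle\sum_{n\in\Z}P_{\phi_n(0)}\Big)\,dm .
\]
Here $\phi_n(0)=:r_n$ obeys $r_{-n}=-r_n$ (by \eqref{canonical_auto} and $\phi\inv(z)=-\phi(-z)$), and $P_{-a}(\eith)=P_a(-\eith)$, so Lemma~\ref{sum_estimate}, applied once at the angle $\theta$ and once at the angle $\theta\pm\pi$, gives a constant $C=C(\mu)$ with $\sum_{n\in\Z}P_{\phi_n(0)}(\eith)\le C\big(\tfrac1{|\theta|}+\tfrac1{\pi-|\theta|}\big)$ for $|\theta|<\pi$. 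Since $2|\sin\theta|\big(\tfrac1{|\theta|}+\tfrac1{\pi-|\theta|}\big)$ is bounded on $[-\pi,\pi]$ (removable singularities at $0$ and $\pm\pi$), the integrand above is $\le\const\cdot|g(\eith)|^2$, and the estimate follows from $g\in\htwo$. This is exactly where the square-root vanishing of $f$ is used, and it is exactly used up: the zero at $+1$ absorbs the peaks of $P_{r_n}$ as $n\to+\infty$, and the zero at $-1$ absorbs those of $P_{-r_n}$ as $n\to-\infty$.

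Now the conclusion. Each $S_N(\theta)$ lies in the closed subspace $D_f$ of \eqref{df_defn}, hence so does $F_{\eith}$ for a.e.\ $\theta$. Applying $\cphi$ and reindexing,
\[
  \cphi S_N(\theta) ~=~ e^{i\theta}\Big(S_N(\theta)+e^{-i(N+1)\theta}(f\circ\phi_{N+1})-e^{iN\theta}(f\circ\phi_{-N})\Big),
\]
and the two correction terms tend to $0$ in $\htwo$ (their norms are terms of a convergent sum of squares), while $\cphi$ is bounded; letting $N\to\infty$ along the good subsequence gives $\cphi F_{\eith}=e^{i\theta}F_{\eith}$ for a.e.\ $\theta$. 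Finally, $F$ cannot vanish a.e.: otherwise $S$ would be the zero element of $L^2([-\pi,\pi];\htwo)$, forcing all of its Fourier coefficients — that is, all the $f\circ\phi_n$, and in particular $f$ itself — to vanish, contrary to $f\neq 0$ (if $f=0$ then $D_f=\{0\}$ and there is nothing to prove). Hence $\{\theta:F_{\eith}\neq 0\}$ has positive measure, and for a.e.\ such $\theta$ the nonzero vector $F_{\eith}\in D_f$ is an eigenvector of $\cphidf$ for the eigenvalue $\eith$; thus $\sigma_p(\cphidf)$ meets $\bdu$ in a set of positive measure.

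The one genuinely technical step is the summability estimate, and it is powered entirely by Lemma~\ref{sum_estimate}; everything after it is soft. In particular, unlike in Theorem~\ref{main_thm_1}, no ``possible discrete exceptional set'' intrudes: non-triviality of the eigenvectors is automatic, because vanishing of the $L^2([-\pi,\pi];\htwo)$-valued sum annihilates all of its Fourier coefficients. On the other hand this argument seems to yield only positive measure, not a full arc — which is consistent with the statement, and is the reason the stronger hypothesis $f\in\sqrt{(z+1)(z-1)}\,H^p$, $p>2$, will be needed later to produce a whole annulus.
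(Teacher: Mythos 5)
Your proof is correct and follows essentially the same route as the paper: the two-sided summability estimate $\sum_{n\in\Z}\norm{f\circ\phi_n}^2<\infty$ via Lemma \ref{sum_estimate}, a.e.\ convergence of a subsequence of symmetric partial sums of the $\htwo$-valued Fourier series, the same reindexing identity for $\cphi S_N$, and the same Fourier-coefficient argument for non-vanishing on a set of positive measure. The only (cosmetic) difference is that you bound the full two-sided Poisson sum by $C\big(\tfrac{1}{|\theta|}+\tfrac{1}{\pi-|\theta|}\big)$ and absorb it with the explicit boundary modulus $|f(\eith)|^2=2|\sin\theta|\,|g(\eith)|^2$, whereas the paper treats the positively and negatively indexed halves separately against $|f|^2/|\zeta-1|$ and $|f|^2/|\zeta+1|$.
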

\begin{proof}
We will work in the Hilbert space $L^2(\htwo,dm)$ consisting of $\htwo$-valued ($m$-equivalence classes of) measurable functions $F$ on $\bdu$ with
$$
     |||F|||^2 ~:=~ \int_{\bdu} \|F(\omega)\|^2 dm(\omega) <\infty.
$$
I will show in a moment that the hypothesis on $f$ implies 
\begin{equation} \label{series_conv}
  \sum_{n\in\Z} \norm{f\circ\phi_n}^2 < \infty \,.
\end{equation}
Granting this, it is easy to check that the $\htwo$-valued Fourier series
\begin{equation} \label{vector_fs}
  \sum_{n\in\Z} (f\circ\phi_n) \,  \omega^{-n} \qquad (\omega\in\bdu)
\end{equation}
converges unconditionally in $L^2(\htwo,dm)$, so at least formally, we expect that for a.e. $\omega\in\bdu$ we'll have $\cphi(F(\omega)) = \omega F(\omega)$. This is true, but a little care is needed to prove it. The ``unconditional convergence'' mentioned above means this: If, for each finite subset $E$ of $\Z$, 
$$
     S_E(\omega) := \sum_{n \in E} (f\circ\phi_n) \omega^{-n} 
                             \qquad (\omega\in\bdu) \, ,
$$
then the net 
$
     (S_E: E~{\rm a~finite~subset~of}~ \Z)
$
  converges in $L^2(\htwo, dm)$ to $F$. In particular, if for each non-negative integer $n$ we define  $F_n = S_{[-n,n]}$,  then $F_n\goesto F$ in $L^2(\htwo,dm)$, hence some subsequence  $(F_{n_k}(\omega))_{k=1}^\infty$ converges in $\htwo$ to $F(\omega)$ for a.e. $\omega\in\bdu$. Now for any $n$ and any $\omega\in\bdu$:
  $$
     \cphi F_n(\omega) = \omega F_n(\omega) 
                                   - \omega^{n+1}f\circ\phi_{-n}
                                   +\omega^{-n} f\circ\phi_{n+1}                                       
  $$
 which implies, since \eqref{series_conv} guarantees that  $\norm{f\circ\phi_n}\goesto 0$ as $n\goesto\infty$, that
 $$
        \cphi F_n(\omega) - \omega F_n(\omega)  
          \goesto 0~~{\rm in}~~\htwo \qquad (n\goesto\infty).
  $$
This, along with the a.e. convergence of the subsequence $(F_{n_k})$ to $F$, shows that $\cphi F(\omega) = \omega F(\omega)$ for a.e. $\omega\in\bdu$. Now the $\htwo$-valued Fourier coefficients $f\circ\phi_n$ are not all zero (in fact, none of them  are zero) so at least for a subset of points $\omega\in\bdu$ having positive measure we have $F(\omega)\neq 0$. The corresponding $\htwo$-functions $F(\omega)$ are therefore eigenfunctions of $\cphi$ that belong to $D_f$, thus $\sigma_p(\cphidf)\cap\bdu$ has positive measure.

It remains to prove \eqref{series_conv}. As usual, we treat the positively and negatively indexed terms separately. Since $f\in\sqrt{z-1}\, \htwo$ we have
$$
    \frac{1}{2\pi}\int_{-\pi}^\pi \frac{|f(\eith)|^2}{|\theta|} d\theta
         ~\le~ \int_{\bdu} \frac{|f(\zeta)|^2}{|\zeta-1|} dm(\zeta)
         ~<~\infty
$$
so successive application of  \eqref{ch_var} and \eqref{poisson_sum_estimate} yields
$$
    \infsum\norm{f\circ\phi_n}^2
            =  \intbdu |f|^2 \left(\infsum \Prn \right)dm 
            ~\le~ \const  \intpi \frac{|f(\eith)|^2}{|\theta|}\, d\theta
            ~<~ \infty \, .
$$

For the negatively indexed terms in \eqref{series_conv}, note that our hypothesis on $f$ guarantees that
\begin{equation} \label{second_hyp}
    \frac{1}{2\pi}\int_{-\pi}^\pi \frac{|f(e^{i(\theta-\pi)})|^2}{|\theta|} d\theta
         ~\le~ \int_{\bdu} \frac{|f(\zeta)|^2}{|\zeta+1|} dm(\zeta)
         ~<~ \infty \, .
\end{equation}
Recall from the proof of Theorem \ref{main_thm_1} that 
$\phi_{-n}(z) = -\phi_n(-z)\,$ for $z\in\U$ and $n>0$, and so
$$
    \norm{f\circ\phi_{-n}}^2 = \int |f|^2P_{-r_n} dm
          = \frac{1}{2\pi} \int_{-\pi}^\pi |f(\eith)|^2P_{r_n}(\theta-\pi)\,d\theta.
$$
Thus
\begin{eqnarray*}
   \sum_{n=1}^\infty\norm{f\circ\phi_{-n}}^2 
         &=&
                \frac{1}{2\pi} \int_{-\pi}^\pi |f(\eith)|^2 
                       \left(\sum_{n=1}^\infty P_{r_n}(\theta-\pi)\right) d\theta \\
          &=&
                 \frac{1}{2\pi} \int_{-\pi}^\pi |f(e^{i(\theta-\pi)})|^2 
                       \left(\sum_{n=1}^\infty P_{r_n}(\theta)\right) d\theta  \\
           & &\\            
           & \le& \const\, \int_{-\pi}^\pi 
                        \frac{|f(e^{i(\theta-\pi)})|^2}{|\theta|} d\theta \\
                        & &\\
            &<& \infty                     
\end{eqnarray*}
where the last two lines follow, respectively,  from inequalities \eqref{poisson_sum_estimate} and \eqref{second_hyp}.  This completes the  proof of \eqref{series_conv}, and with it, the proof of the Theorem.
\end{proof} 

It would be of interest to know just how large a set $\sigma_p(\cphidf)$ has to be in Theorem \ref{main_thm_2}. Might it always be the whole unit circle? Might it be even larger?  What I do know is that if the hypothesis of the Theorem is strengthened by replacing the hypothesis ``$f\in \sqrt{(z+1)(z-1)}\,\htwo$\,'' with the stronger ``$f\in \sqrt{(z+1)(z-1)}\,H^p$ for some $p>2$\,'', then the conclusion improves dramatically, as shown below by the result below, whose proof reprises the latter part of the proof of Theorem \ref{main_thm_1}.

\begin{thm} \label{main_thm_3} {\rm (cf. \cite[Theorem 5.5]{Mat2})}
If $f\in \sqrt{(z+1)(z-1)}\, H^p\backslash\{0\}$ for some $p>2$, then $\sigma_p(\cphidf)$ contains, except possibly for a discrete subset, the open annulus $A(\mu^{-\eps},\mu^\eps)$ where $\eps = \onehalf-\frac{1}{p}$.
\end{thm}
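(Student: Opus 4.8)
The plan is to reprise the ``positively indexed subseries'' argument from the proof of Theorem~\ref{main_thm_1}, but to extract the extra decay not from a pointwise vanishing hypothesis at the fixed points but from the higher integrability $f\in H^p$ via H\"older's inequality. Write $f=\sqrt{(z+1)(z-1)}\,g$ with $g\in H^p$. As in the proof of Theorem~\ref{main_thm_2}, the two fixed points are handled symmetrically (using $\phi_{-n}(z)=-\phi_n(-z)$ and the substitution $z\mapsto-z$, which turns the factor $\sqrt{z+1}$ into $\sqrt{z-1}$), so it suffices to estimate $\norm{f\circ\phi_n}$ for $n\to+\infty$ and to show it is $\bigoh(\mu^{-n\eps})$ with $\eps=\onehalf-\frac1p$; the same bound for $\norm{f\circ\phi_{-n}}$ then follows, and together they force the $\htwo$-valued Laurent series \eqref{eigen_series} to converge in $\htwo$ on the annulus $A(\mu^{-\eps},\mu^\eps)$. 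The nonvanishing-except-on-a-discrete-set conclusion is then exactly the $\htwo$-valued Laurent series uniqueness argument already invoked in Theorem~\ref{main_thm_1}.

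The key computation is the bound on $\norm{f\circ\phi_n}^2=\int|f|^2P_{r_n}\,dm$. Starting from $|f(\eith)|^2 = |\eith+1|\,|\eith-1|\,|g(\eith)|^2 \le 2|\theta|\,|g(\eith)|^2$ (up to the usual comparison constants on $|\theta|\le\pi$) and the Poisson estimate \eqref{poisson_int_est} together with \eqref{mu_power_est}, one gets, for $n\ge0$,
\begin{equation*}
  \norm{f\circ\phi_n}^2 \;\lesssim\; \int_{-\pi}^\pi |g(\eith)|^2\,\frac{\mu^{-n}|\theta|}{\mu^{-2n}+\theta^2}\,d\theta.
\end{equation*}
Now apply H\"older's inequality with exponents $p/2$ and $(p/2)'=p/(p-2)$: the factor $|g|^2$ contributes $\norm{g}_{H^p}^2$, while the remaining factor is $\bigl(\int_{-\pi}^\pi \bigl(\mu^{-n}|\theta|/(\mu^{-2n}+\theta^2)\bigr)^{p/(p-2)}\,d\theta\bigr)^{(p-2)/p}$. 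The scaling substitution $\theta=\mu^{-n}x$ pulls out a power of $\mu^{-n}$: the kernel $\mu^{-n}|\theta|/(\mu^{-2n}+\theta^2) = |x|/(1+x^2)$ is scale-invariant, and $d\theta=\mu^{-n}dx$, so that integral is $\mu^{-n\cdot(p-2)/p}\cdot\bigl(\int_{|x|\le\pi\mu^n}(|x|/(1+x^2))^{p/(p-2)}\,dx\bigr)^{(p-2)/p}$. Since $p>2$, the exponent $p/(p-2)>1$, so $(|x|/(1+x^2))^{p/(p-2)}$ decays like $|x|^{-p/(p-2)}$ at infinity and the $x$-integral converges as $\mu^n\to\infty$; it is therefore bounded by a constant independent of $n$. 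Collecting powers of $\mu^{-n}$ gives $\norm{f\circ\phi_n}^2=\bigoh(\mu^{-n(p-2)/p})=\bigoh(\mu^{-2n\eps})$, i.e. $\norm{f\circ\phi_n}=\bigoh(\mu^{-n\eps})$, as required.

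I expect the only real point needing care to be the bookkeeping of exponents in the H\"older/scaling step --- checking that $p>2$ is exactly what makes both $p/(p-2)>1$ (so the tail integral converges) and produces the claimed rate $\eps=\onehalf-\frac1p$, and confirming that the $x$-integral is bounded uniformly in $n$ rather than merely finite for each $n$ (it increases to $\int_\R(|x|/(1+x^2))^{p/(p-2)}\,dx<\infty$, so this is automatic). Everything else --- the symmetry reduction at $z=-1$, the passage from norm decay to $\htwo$-convergence of \eqref{eigen_series} on $A(\mu^{-\eps},\mu^\eps)$, and the discrete exceptional set --- is verbatim from the proofs of Theorems~\ref{main_thm_1} and~\ref{main_thm_2}. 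One should also note in passing that $H^p\subset\htwo$ for $p>2$, so $g\in\htwo$ and all the $\htwo$-valued machinery applies; and that when $p\downarrow2$ one has $\eps\downarrow0$, consistently recovering the borderline situation of Theorem~\ref{main_thm_2}, while $p\to\infty$ gives $\eps\to\onehalf$, the best possible exponent.
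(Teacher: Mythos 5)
Your argument is correct, but it takes a genuinely different route from the paper's. The paper does not return to the Poisson kernel in this proof at all: it shows that the hypothesis forces $f\in[(z-1)(z+1)]^{\onehalf+\delta}\htwo$ for every $0<\delta<\eps$ --- writing $f=[(z-1)(z+1)]^{\onehalf+\delta}h$ with $h=[(z-1)(z+1)]^{-\delta}g$, $g\in H^p$, and applying H\"older with exponents $p/2$ and $p/(p-2)$ to see that $h\in\htwo$, the point being that $\delta<\eps$ is exactly the condition $2p\delta/(p-2)<1$ that puts $[(z-1)(z+1)]^{-\delta}$ in $H^{2p/(p-2)}$ --- and then simply quotes Theorem~\ref{main_thm_1} to get convergence of \eqref{eigen_series} on $A(\mu^{-\delta},\mu^\delta)$ for each such $\delta$, hence on the union $A(\mu^{-\eps},\mu^\eps)$. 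You instead apply H\"older with the same conjugate exponents inside the integral $\int|f|^2P_{r_n}\,dm$ and exploit the scale invariance of $|x|/(1+x^2)$ under $\theta=\mu^{-n}x$; your exponent bookkeeping is right ($q=p/(p-2)>1$ makes the tail integrable, and the factor $\mu^{-n/q}$ gives $\mu^{-n(p-2)/p}=\mu^{-2n\eps}$), and the symmetry reduction at $-1$ and the discrete-exceptional-set conclusion are handled exactly as in the paper. The trade-off: the paper's proof is shorter because Theorem~\ref{main_thm_1} already encapsulates all the kernel estimates, whereas yours is self-contained at the level of the norm computation and delivers the endpoint decay $\norm{f\circ\phi_n}=\bigoh(\mu^{-n\eps})$ directly rather than $\bigoh(\mu^{-n\delta})$ for each $\delta<\eps$; both yield the same open annulus.
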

\begin{proof}
 I will show that the   hypothesis implies that
 $f\in[(z-1)(z+1)]^{\onehalf+\delta}\htwo$ for each positive $\delta<\eps$. This will guarantee, by the proof of Theorem \ref{main_thm_1}, that the series \eqref{eigen_series} converges in the open annulus $A(\mu^{-\delta},\mu^\delta)$ for each such $\delta$, and hence it converges in $A(\mu^{-\eps},\mu^\eps)$, which will, just as in the proof of Theorem \ref{main_thm_1} finish the matter. The argument below, suggested by Paul Bourdon, greatly simplifies my original one.
Our hypotheses of $f$ imply that for some $g\in H^p$,
$$
f=[(z-1)(z+1)]^{\onehalf + \delta} h \quad 
     {\rm  where} \quad h = [(z-1)(z+1)]^{-\left(\onehalf + \delta\right)} g.
$$
To show: $h\in\htwo$. The hypothesis on $\delta$ can be rewritten: $2p\delta/(p-2)<1$, so the function $[(z-1)(z+1)]^{-\delta}$ belongs to $H^{\frac{2p}{p-2}}$, hence an application of H\"older's inequality shows that $h$ is in $\htwo$ with norm bounded by the product of the $H^p$-norm of $g$ and the $H^{\frac{2p}{p-2}}$-norm of $[(z-1)(z+1)]^{-\delta}$.
\end{proof}

In both \cite{Chk} and \cite[Theorem 5.3]{Mat2} there are results where the hypotheses on $f$ involve uniform boundedness for $f$ at one or both of the fixed points of $\phi$. In \cite[Theorem 5.4]{Mat2} Matache shows that these uniform conditions can be replaced by boundedness of a certain family of Poisson integrals, and from this he derives the following result. 
\begin{quote}
{\em
\cite[Theorem 5.5]{Mat2} If $f\in (z-1)^{\frac{2}{p}}\,H^p$ for some $p>2$, and $f$ is bounded in a neighborhood of $-1$, then $\sigma_p(\cphidf)$ contains an open annulus centered at the origin.
}
\end{quote}
I'll close this section by presenting some results of this type, where uniform boundedness at one of the fixed points is replaced by boundedness of the Hardy-Littlewood maximal function. This is the function, defined for $g$  non-negative and integrable on $\bdu$, and $\zeta\in\bdu$,  by:
$$
     M[g](\zeta) :=  \sup\left\{\frac{1}{m(I)}\int_I g \, dm: I 
       ~{\rm an~arc~of}~ \bdu~{\rm centered~at}~\zeta\right\}.
$$
The {\em radial maximal function\/} $R[g]$ of $g$ at $\zeta\in\bdu$ is the supremum of the values of the Poisson integral of $g$ on the radius  $[0,\zeta)$. It is easy to check that $M[g]$ is dominated pointwise on $\bdu$ by a constant multiple of $R[g]$. What is perhaps surprising, but still elementary, is the fact that there is a similar inequality in the other direction: the radial maximal function of the non-negative integrable function $g$ is dominated pointwise on $\bdu$ by a constant multiple of its Hardy-Littlewood maximal function (see \cite[Theorem 11.20, page 242]{Rud}). This and \eqref{ch_var} yield
\begin{lem} \label{max_fcn_lemma}
   For  $f\in\htwo$,
   $$
       M[|f|^2](-1) < \infty 
                ~~\implies~~ 
                     \sup\{\norm{f\circ\phi_n}: n<0\} < \infty.                                      
   $$
\end{lem}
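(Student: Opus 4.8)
The plan is to read the conclusion straight off the change-of-variable formula \eqref{ch_var}, combined with the comparison between the radial and Hardy--Littlewood maximal functions recalled just above the statement.

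First I would identify the orbit point $\phi_n(0)$ for negative $n$. As observed in the proof of Theorem \ref{main_thm_1}, $\phi_{-n}(z) = -\phi_n(-z)$ for each positive integer $n$, so $\phi_{-n}(0) = -\phi_n(0) = -r_n$, with $r_n\in(0,1)$ given by \eqref{nth_iterate}. Hence for every $n<0$ the point $\phi_n(0)$ lies on the open radius $[0,-1)$ running from the origin to the repulsive fixed point $-1$.

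Next I would apply \eqref{ch_var} with $a = \phi_n(0) = -r_{|n|}$ to obtain
$$
  \norm{f\circ\phi_n}^2 ~=~ \intbdu |f|^2\, P_{-r_{|n|}}\, dm \qquad (n<0).
$$
Since $f\in\htwo$, the function $|f|^2$ is non-negative and integrable on $\bdu$, and the right-hand side above is exactly the value at the point $-r_{|n|}\in[0,-1)$ of the Poisson integral of $|f|^2$. Each such value is therefore at most the radial maximal function $R[|f|^2](-1)$, so
$$
  \sup\{\norm{f\circ\phi_n}^2 : n<0\} ~\le~ R[|f|^2](-1).
$$
Finally I would invoke the pointwise domination $R[|f|^2](-1) \le \const M[|f|^2](-1)$ (from \cite[Theorem 11.20, page 242]{Rud}, as recalled above). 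The hypothesis $M[|f|^2](-1)<\infty$ then makes the supremum finite, which is the assertion of the lemma.

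There is no genuine obstacle here; the only step needing a moment's care is the identification of $\phi_n(0)$ for $n<0$ as a point on the radius terminating at $-1$, so that the Poisson averages produced by \eqref{ch_var} are indeed sampled by the radial maximal function at $-1$ and not along some other approach to the boundary.
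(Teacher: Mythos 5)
Your argument is correct and is precisely the paper's own proof: the lemma is stated as following from \eqref{ch_var} together with the pointwise domination of the radial maximal function by the Hardy--Littlewood maximal function, exactly as you lay out. Your identification of $\phi_n(0)=-r_{|n|}$ as a point of the radius $[0,-1)$ is the (implicit) step the paper leaves to the reader, and you have supplied it correctly.
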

To see that the hypotheses of Lemma \ref{max_fcn_lemma} can be satisfied by functions in $\htwo$ that are unbounded as $z\goesto -1$, one need only observe that 
$$
    M[|f|^2](-1) ~\le~   \const\,\int \frac{|f(\zeta)|^2}{|1+\zeta|}\,dm(\zeta) \, ,
$$
hence, along with \eqref{ch_var}, the Lemma implies:
\begin{cor} \label{zplusone_cor}
If $f\in\sqrt{z+1} \, \htwo$ ~then~ $\sup\{\norm{f\circ\phi_n}: n<0\} < \infty$. 
\end{cor}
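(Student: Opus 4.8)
The plan is to obtain the corollary directly from Lemma~\ref{max_fcn_lemma}: it suffices to show that the hypothesis $f\in\sqrt{z+1}\,\htwo$ forces $M[|f|^2](-1)<\infty$, since then the lemma gives $\sup\{\norm{f\circ\phi_n}:n<0\}<\infty$. Two observations do this. First, since $\sqrt{z+1}$ is a bounded outer function whose boundary values satisfy $|\sqrt{\zeta+1}|^2=|1+\zeta|$ for a.e.\ $\zeta\in\bdu$, writing $f=\sqrt{z+1}\,g$ with $g\in\htwo$ gives
$$
   \int_{\bdu}\frac{|f(\zeta)|^2}{|1+\zeta|}\,dm(\zeta)=\norm{g}^2<\infty
$$
(and, conversely, finiteness of this integral lets one recover $g=f/\sqrt{z+1}\in\htwo$, so membership in $\sqrt{z+1}\,\htwo$ is in fact equivalent to finiteness of the integral). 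Second, I claim the elementary pointwise bound $M[|f|^2](-1)\le\const\int_{\bdu}|f(\zeta)|^2/|1+\zeta|\,dm(\zeta)$.

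To prove this bound, fix an arc $I\subset\bdu$ centered at $-1$. A comparison of chord length with arc length (near $\zeta=e^{i\theta}=-1$ one has $|1+\zeta|\asymp|\theta-\pi|$) shows $|1+\zeta|\le\const\, m(I)$ for every $\zeta\in I$, so
\begin{align*}
   \frac{1}{m(I)}\int_I |f|^2\,dm
     &= \frac{1}{m(I)}\int_I\frac{|f(\zeta)|^2}{|1+\zeta|}\,|1+\zeta|\,dm(\zeta) \\
     &\le \const\int_I\frac{|f(\zeta)|^2}{|1+\zeta|}\,dm(\zeta)
       \le \const\int_{\bdu}\frac{|f(\zeta)|^2}{|1+\zeta|}\,dm(\zeta).
\end{align*}
Taking the supremum over all such arcs $I$ gives the displayed inequality; this is just the easy half of the equivalence between the radial maximal function and the Hardy--Littlewood maximal function quoted before Lemma~\ref{max_fcn_lemma}, specialized to the point $-1$.

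Putting the pieces together, the hypothesis makes the integral finite, hence $M[|f|^2](-1)<\infty$, and Lemma~\ref{max_fcn_lemma} yields the conclusion. I do not expect any genuine obstacle: the substantive content lies in Lemma~\ref{max_fcn_lemma}, which is already established, and in the elementary arc-versus-chord estimate above; the only care needed is bookkeeping with the normalization of $m$ and with the comparison $|1+\zeta|\asymp|\theta-\pi|$ near $-1$.
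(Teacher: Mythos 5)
Your proof is correct and follows essentially the same route as the paper: the corollary is deduced from Lemma~\ref{max_fcn_lemma} via the inequality $M[|f|^2](-1)\le\const\int_{\bdu}|f(\zeta)|^2/|1+\zeta|\,dm(\zeta)$, which the paper simply asserts and you verify with the elementary arc-versus-chord estimate. (Only quibble: that bound is a direct estimate, not literally ``the easy half'' of the radial/Hardy--Littlewood comparison quoted before the lemma, but this does not affect the argument.)
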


Thus if $f\in\sqrt{z+1}\,\htwo$, or more generally if $M[|f|^2](-1)<\infty$, the negatively indexed subseries of \eqref{eigen_series} will converge in $\htwo$ for all $\lambda\in\U$. We have seen in the proof of Theorem \ref{main_thm_1} that if $f\in (z-1)^{\onehalf + \eps}\,\htwo$ for some $\eps\in(0,1/2]$ then the positively indexed subseries of \eqref{eigen_series} converges for $|\lambda| > \mu^{-\eps}$. Putting it all together we obtain the promised ``$\delta=0$'' case of Theorem \ref{main_thm_1}:
\begin{thm} \label{main_thm_4} 
Suppose $f\in (z+1)^{\onehalf}(z-1)^{\onehalf+\eps}\,\htwo\backslash\{0\}$ for some $0<\eps<1/2$. Then $\sigma_p(\cphidf)$ contains, with the possible exception of a discrete subset, the open annulus $A(\mu^{-\eps}, 1)$. 
\end{thm}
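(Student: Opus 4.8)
The plan is to assemble Theorem~\ref{main_thm_4} from two pieces already developed in the excerpt, following the proof strategy of Theorem~\ref{main_thm_1} but replacing the factor controlling the behavior at the repulsive fixed point $-1$ by the weaker ``$\delta=0$'' analysis furnished by Corollary~\ref{zplusone_cor}. Concretely, given $f\in(z+1)^{\onehalf}(z-1)^{\onehalf+\eps}\,\htwo\setminus\{0\}$, I would first split off the two factors: since $f\in\sqrt{z+1}\,\htwo$, Corollary~\ref{zplusone_cor} gives $\sup\{\norm{f\circ\phi_n}:n<0\}<\infty$, so the negatively indexed subseries $\sum_{n<0}\lambda^{-n}(f\circ\phi_n)$ of \eqref{eigen_series} converges in $\htwo$ whenever $|\lambda|<1$. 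Second, since $f\in(z-1)^{\onehalf+\eps}\,\htwo$, the estimate carried out in the proof of Theorem~\ref{main_thm_1} shows $\norm{f\circ\phi_n}=\bigoh(\mu^{-n\eps})$ as $n\to\infty$, so the positively indexed subseries converges in $\htwo$ whenever $|\lambda|>\mu^{-\eps}$.

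Putting these together, for every $\lambda$ in the open annulus $A(\mu^{-\eps},1)$ the full $\htwo$-valued Laurent series \eqref{eigen_series} converges in the norm of $\htwo$ to a function $F_\lambda\in\htwo$. As in the proof of Theorem~\ref{main_thm_1}, $F_\lambda$ lies in $D_f$ (it is a norm limit of finite linear combinations of the $\cphi^n f$) and, wherever it is nonzero, satisfies $\cphi F_\lambda=\lambda F_\lambda$, so $\lambda\in\sigma_p(\cphidf)$. The only $\lambda$ in the annulus that can fail to produce an eigenvector are those for which $F_\lambda\equiv 0$, and by the vector-valued uniqueness theorem (apply bounded linear functionals and reduce to the scalar case, exactly as invoked in Theorem~\ref{main_thm_1}) this exceptional set is discrete in $A(\mu^{-\eps},1)$. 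That yields the stated conclusion.

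I do not expect a genuine obstacle here, since every ingredient is already in place; the one point requiring a word of care is making sure the two convergence regions genuinely overlap to give a nondegenerate annulus. Because $0<\eps<1/2$ we have $\mu^{-\eps}<1$, so $A(\mu^{-\eps},1)$ is a nonempty open annulus, and on it both subseries converge simultaneously. A second minor point worth stating explicitly is that $F_\lambda$ is not identically the zero function for \emph{at least} the non-exceptional $\lambda$ — equivalently, that the $\htwo$-valued coefficient sequence $(f\circ\phi_n)_{n\in\Z}$ is not the zero sequence, which is immediate since $\norm{f\circ\phi_n}=\norm{\cphi^n f}>0$ for every $n$ (as $\cphi$ is invertible and $f\neq 0$). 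With those remarks the argument is a direct transcription of the relevant half of the proof of Theorem~\ref{main_thm_1}.
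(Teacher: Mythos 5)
Your argument is exactly the paper's: the theorem is stated there as the immediate consequence of Corollary~\ref{zplusone_cor} (boundedness of $\norm{f\circ\phi_n}$ for $n<0$, giving convergence of the negatively indexed subseries on $|\lambda|<1$) combined with the estimate $\norm{f\circ\phi_n}=\bigoh(\mu^{-n\eps})$ from the proof of Theorem~\ref{main_thm_1} for the positively indexed subseries, followed by the same vector-valued uniqueness argument for the discrete exceptional set. The proposal is correct and essentially a transcription of the paper's reasoning.
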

\begin{rem} \label{Chkliar_remark}{\em
By the discussion preceding this theorem, the hypothesis on $f$ could be replaced by the weaker: ``$f\in (z-1)^{\onehalf+\eps}\,\htwo\backslash\{0\}$ and $M[|f|^2](-1)<\infty$, '' (cf. \cite{Chk}). If,  in either version, the hypotheses on the attractive and repulsive fixed points are reversed, then the conclusion will assert that $\sigma_p(\cphidf)$ contains, except for perhaps a discrete subset, the annulus $A(1, \mu^\eps)$ (see \S\ref{non-canonical}, especially the discussion preceding  Corollary \ref{inverse_cor}). 
}
\end{rem}
\begin{rem}{\em
Note how the previously mentioned Theorem 5.5 of \cite{Mat2} follows from the work above. Indeed, if $f\in (z-1)^{2/p}\,H^p$ for some $p>2$ then by H\"older's inequality  $f\in (z-1)^{\onehalf+\eps}\,\htwo$, for each $\eps<1/p$. Thus, as in  the proof of Theorem \ref{main_thm_1},  the positively indexed subseries of \eqref{eigen_series} converges for $|\lambda|>\mu^{-1/p}$, and  by Lemma \ref{max_fcn_lemma} the boundedness of $f$ in a neighborhood of $-1$ insures that the negatively indexed subseries of \eqref{eigen_series} converges in the open unit disc. Thus as in the proof of Theorem \ref{main_thm_1},  $\sigma_p(\cphidf)$ contains, with the possible exception of a discrete subset,  the open annulus $A(\mu^{-1/p},1)$.
}
\end{rem}

\section{Complements and comments} \label{complements}

In this section I collect some further results and say a few more words about the theorem of Nordgren, Rosenthal, and Wintrobe.

\subsection{Non-canonical hyperbolic automorphisms} \label{non-canonical}
The results of \S\ref{main_results}, which refer only to canonical hyperbolic automorphisms $\phi$, can be easily ``denormalized''. Here is a sample:

\begin{thm} \label{non_canonical_thm}
Suppose $\phi$ is a hyperbolic automorphism of $\U$ with attractive fixed point $\alpha$, repulsive one $\beta$, and multiplier $\mu>1$. Then 
\begin{itemize}
   \item[(a)] {\rm (cf. Theorem \ref{main_thm_1})} Suppose, for $0<\eps,\delta<1/2$ we have 
$$
f\in (z-\alpha)^{\frac{1}{2}+\eps} (z-\beta)^{\frac{1}{2}+\delta}\htwo\backslash\{0\}.
$$ 
Then $\sigma_p(\cphidf)$ contains, except possibly for a discrete subset, the open annulus $A(\mu^{-\eps},\mu^\delta)$.
    \item[(b)] {\rm (cf. Theorem \ref{main_thm_2})}  If $f\in\sqrt{(z-\alpha)(z-\beta)} \, \htwo$ then $\sigma_p(\cphi|_{D_f})$ intersects $\bdu$ in a set of positive measure.
    \item[(c)]{\rm  (cf. Theorem \ref{main_thm_3})} If $f\in \sqrt{(z-\alpha)(z-\beta)}\, H^p\backslash\{0\}$ for some $p>2$, then $\sigma_p(\cphidf)$ contains, except possibly for a discrete subset, the open annulus $A(\mu^{-\eps},\mu^\eps)$ where $\eps = \onehalf-\frac{1}{p}$.        
\end{itemize}
\end{thm}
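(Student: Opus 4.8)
The plan is to reduce Theorem \ref{non_canonical_thm} to the corresponding canonical results (Theorems \ref{main_thm_1}, \ref{main_thm_2}, \ref{main_thm_3}) by conjugating $\phi$ to a canonical hyperbolic automorphism. As explained in \S\ref{hypautos}, there is an automorphism $\sigma$ of $\U$ with $\sigma(1)=\alpha$ and $\sigma(-1)=\beta$ such that $\phi_0:=\sigma^{-1}\circ\phi\circ\sigma$ is the canonical hyperbolic automorphism with multiplier $\mu$ (attractive fixed point $+1$, repulsive $-1$). Indeed such a $\sigma$ exists because the three-parameter group of disc automorphisms acts transitively on ordered pairs of distinct boundary points, and conjugation preserves the multiplier. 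Setting $C_\sigma$ for the composition operator induced by $\sigma$ on $\htwo$ — which is bounded and invertible, with $C_\sigma^{-1}=C_{\sigma^{-1}}$ — we have the intertwining relation $\cphi C_\sigma = C_\sigma C_{\phi_0}$, i.e. $C_{\phi_0} = C_\sigma^{-1}\cphi C_\sigma$.

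Next I would track how the doubly-cyclic subspaces and the function-theoretic hypotheses transform under this conjugation. From $C_{\phi_0}^n = C_\sigma^{-1}\cphi^n C_\sigma$ one gets $C_{\phi_0}^n(C_\sigma^{-1}f) = C_\sigma^{-1}(\cphi^n f)$ for all $n\in\Z$, and since $C_\sigma^{-1}$ is a bounded invertible operator it commutes with closed-linear-span, so $D_{C_\sigma^{-1}f}$ (formed with respect to $\phi_0$) equals $C_\sigma^{-1}(D_f)$ (formed with respect to $\phi$). Consequently $C_{\phi_0}|_{D_{C_\sigma^{-1}f}}$ is similar to $\cphi|_{D_f}$, so the two operators have the same point spectrum: $\sigma_p(\cphi|_{D_f}) = \sigma_p(C_{\phi_0}|_{D_{C_\sigma^{-1}f}})$. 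It remains only to check that the hypothesis on $f$ in each part of the theorem translates into the corresponding canonical hypothesis on $g:=C_\sigma^{-1}f = f\circ\sigma^{-1}$. Because $\sigma$ is a disc automorphism, $\sigma^{-1}(z)-1$ and $\sigma^{-1}(z)+1$ each equal a nonvanishing bounded holomorphic factor (with bounded holomorphic reciprocal) times $z-\alpha$ and $z-\beta$ respectively — concretely, writing $\sigma^{-1}(z) = e^{i\tau}\frac{z-a}{1-\bar a z}$ one computes $\sigma^{-1}(z)\mp 1 = \frac{(e^{i\tau}\mp 1)(z - \sigma(\pm1))}{1-\bar a z}$, and $\sigma(1)=\alpha$, $\sigma(-1)=\beta$. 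Hence multiplication by $(\sigma^{-1}(z)-1)^{1/2+\eps}(\sigma^{-1}(z)+1)^{1/2+\delta}$ and by $(z-\alpha)^{1/2+\eps}(z-\beta)^{1/2+\delta}$ generate the same subspace of $\htwo$ (the extra factors being invertible multipliers), so $f\in (z-\alpha)^{1/2+\eps}(z-\beta)^{1/2+\delta}\htwo$ iff $g\in (z-1)^{1/2+\eps}(z+1)^{1/2+\delta}\htwo$; the analogous equivalences hold for parts (b) and (c), using additionally that $C_\sigma$ preserves $H^p$ for every $p$ (again because $\sigma$ is an automorphism, so the change-of-variables factor $|\sigma^{-1}\,'|$ and its reciprocal are bounded on $\bdu$). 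Finally, conjugating the annulus: since $\phi_0$ and $\phi$ share the multiplier $\mu$, the annuli $A(\mu^{-\eps},\mu^\delta)$ in the canonical conclusions are exactly the annuli claimed for $\phi$, and a discrete exceptional set maps to a discrete exceptional set; a set of positive measure on $\bdu$ stays such.

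The only genuinely delicate point — and the one I would write out carefully — is the factorization $\sigma^{-1}(z)\mp1 = (\text{invertible multiplier})\cdot(z-\sigma(\pm1))$ together with the claim that multiplying by $(z-\alpha)^{1/2+\eps}$ versus $((z-\alpha)\cdot u(z))^{1/2+\eps}$, for $u$ a zero-free bounded holomorphic function with bounded reciprocal, yields the same subspace. This needs the observation that $u^{1/2+\eps}$ (a well-defined bounded zero-free holomorphic function, via the principal logarithm since $u$ avoids a neighborhood of $0$ on $\overline{\U}$ after shrinking — more precisely $u$ extends to a nonvanishing function on $\overline{\U}$) is an invertible multiplier of $\htwo$ (and of $H^p$), hence leaves the relevant dense ranges invariant. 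Everything else is bookkeeping: the similarity of restricted operators, the equality of point spectra under similarity, and the identity $D_{C_\sigma^{-1}f} = C_\sigma^{-1} D_f$.

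\begin{proof}
Let $\sigma$ be a disc automorphism with $\sigma(1)=\alpha$, $\sigma(-1)=\beta$; such a map exists because the automorphism group acts transitively on ordered pairs of distinct boundary points. Since conjugation by a linear fractional map preserves the multiplier, $\phi_0:=\sigma^{-1}\circ\phi\circ\sigma$ is the canonical hyperbolic automorphism with multiplier $\mu$. The composition operator $C_\sigma$ is bounded and invertible on $\htwo$ (and on every $H^p$), with $C_\sigma^{-1}=C_{\sigma^{-1}}$, and $C_{\phi_0}=C_\sigma^{-1}\,\cphi\,C_\sigma$; iterating, $C_{\phi_0}^{\,n}=C_\sigma^{-1}\,\cphi^{\,n}\,C_\sigma$ for all $n\in\Z$. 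Hence for $g:=C_\sigma^{-1}f$ we have $C_{\phi_0}^{\,n}g=C_\sigma^{-1}(\cphi^{\,n}f)$, and since $C_\sigma^{-1}$ is bounded and invertible it commutes with closed linear span, giving $D_g=C_\sigma^{-1}(D_f)$ (the first subspace formed relative to $\phi_0$, the second relative to $\phi$). Therefore $C_{\phi_0}|_{D_g}$ is similar to $\cphi|_{D_f}$, so $\sigma_p(\cphi|_{D_f})=\sigma_p(C_{\phi_0}|_{D_g})$.

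It remains to translate the hypotheses. Writing $\sigma^{-1}(z)=e^{i\tau}\dfrac{z-a}{1-\bar a z}$ for suitable $\tau\in\R$, $a\in\U$, a direct computation gives
$$
  \sigma^{-1}(z)-1=\frac{(e^{i\tau}-1)(z-\alpha)}{1-\bar a z},
  \qquad
  \sigma^{-1}(z)+1=\frac{(e^{i\tau}+1)(z-\beta)}{1-\bar a z},
$$
using $\sigma(1)=\alpha$, $\sigma(-1)=\beta$. (Both numerators are nonzero: if $e^{i\tau}=1$ then $\sigma^{-1}$ fixes no point of $\bdu$ at $1$, contradicting $\sigma^{-1}(\alpha)=1$; similarly for $e^{i\tau}=-1$.) Thus $\sigma^{-1}(z)-1=u_1(z)(z-\alpha)$ and $\sigma^{-1}(z)+1=u_2(z)(z-\beta)$, where $u_1,u_2$ extend to nonvanishing holomorphic functions on a neighborhood of $\overline{\U}$, so for any real exponent $s$ the functions $u_1^{\,s},u_2^{\,s}$ (principal branch) are bounded and zero-free with bounded reciprocals, hence invertible multipliers of $\htwo$ and of each $H^p$. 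Consequently
$$
  (\sigma^{-1}(z)-1)^{\frac12+\eps}(\sigma^{-1}(z)+1)^{\frac12+\delta}\,\htwo
   =(z-\alpha)^{\frac12+\eps}(z-\beta)^{\frac12+\delta}\,\htwo,
$$
and likewise with $\htwo$ replaced by $H^p$, or with the exponents both equal to $\frac12$. Since $C_\sigma^{-1}f\in X$ iff $f\in C_\sigma(X)$ for these multiplier-shifted subspaces $X$, and since $C_\sigma$ maps $(z-1)^{\frac12+\eps}(z+1)^{\frac12+\delta}\htwo$ onto $(\sigma^{-1}(z)-1)^{\frac12+\eps}(\sigma^{-1}(z)+1)^{\frac12+\delta}\htwo$ (and analogously for the $H^p$ and $\frac12$ versions), the hypothesis of part (a) on $f$ is equivalent to $g\in(z-1)^{\frac12+\eps}(z+1)^{\frac12+\delta}\htwo\backslash\{0\}$, that of (b) to $g\in\sqrt{(z+1)(z-1)}\,\htwo$, and that of (c) to $g\in\sqrt{(z+1)(z-1)}\,H^p\backslash\{0\}$.

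Applying Theorem \ref{main_thm_1}, Theorem \ref{main_thm_2}, and Theorem \ref{main_thm_3} to $g$ and $\phi_0$ now yields, respectively, that $\sigma_p(C_{\phi_0}|_{D_g})$ contains $A(\mu^{-\eps},\mu^\delta)$ up to a discrete set; that it meets $\bdu$ in a set of positive measure; and that it contains $A(\mu^{-\eps},\mu^{\eps})$ with $\eps=\frac12-\frac1p$ up to a discrete set. Since $\sigma_p(\cphi|_{D_f})=\sigma_p(C_{\phi_0}|_{D_g})$, and the relevant annuli depend only on $\mu$, which is common to $\phi$ and $\phi_0$, the three assertions of the theorem follow.
\end{proof}
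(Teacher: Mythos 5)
Your proposal follows exactly the paper's route: conjugate $\phi$ to the canonical automorphism by an automorphism carrying $\pm1$ to $\alpha,\beta$, observe that the conjugation implements a similarity of $\cphi|_{D_f}$ with the canonical restricted operator, and transfer the hypotheses using the fact that $\frac{\sigma(z)-\sigma(\zeta_0)}{z-\zeta_0}$ is holomorphic and zero-free on a neighborhood of $\overline{\U}$, so that its powers are invertible multipliers of $\htwo$ and of $H^p$. All the essential ingredients are present and the substance is right.

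However, the conjugation is written in the wrong direction, and the error propagates. Since $C_\psi h=h\circ\psi$, one has $C_{u\circ v}=C_vC_u$, so with $\phi_0=\sigma^{-1}\circ\phi\circ\sigma$ the correct identity is $C_{\phi_0}=C_\sigma\,\cphi\,C_\sigma^{-1}$, not $C_\sigma^{-1}\cphi C_\sigma$; consequently the intertwined function is $g=C_\sigma f=f\circ\sigma$, not $f\circ\sigma^{-1}$ (with your $g$, $D_g$ is \emph{not} $C_\sigma^{-1}D_f$). Likewise, $C_\sigma$ maps $(\sigma^{-1}(z)-1)^s(\sigma^{-1}(z)+1)^t\htwo$ \emph{onto} $(z-1)^s(z+1)^t\htwo$, which is the reverse of what you assert; your own factorization of $\sigma^{-1}(z)\mp1$ therefore shows precisely that $f$ satisfies the hypothesis at $\alpha,\beta$ iff $f\circ\sigma$ satisfies the canonical hypothesis at $\pm1$ --- matching the corrected operator identity. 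Because the two reversals are consistent, the whole argument is repaired by replacing $g=f\circ\sigma^{-1}$ with $g=f\circ\sigma$ throughout; nothing conceptual is missing. A smaller slip: in the display $\sigma^{-1}(z)-1=\frac{(e^{i\tau}-1)(z-\alpha)}{1-\bar az}$ the constant should be $e^{i\tau}+\bar a$ (and $e^{i\tau}-\bar a$ for the other factor), which is automatically nonzero since $|a|<1$; your parenthetical argument ruling out $e^{i\tau}=\pm1$ is addressing a non-issue. Neither error affects the structural claim that the factor is an invertible multiplier, which is all the proof needs.
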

\begin{proof}
I'll just outline the idea, which contains no surprises. Suppose $\alpha$ and $\beta$ (both on $\bdu$)  are the fixed points of $\phi$, and---for the moment---that $\talpha$ and $\tbeta$ are any two distinct points of $\bdu$. Then, as we noted toward the end of \S\ref{hypautos}, there is an automorphism  $\psi$ of $\U$ that takes $\talpha$ to $\alpha$ and $\tbeta$ to $\beta$. Thus $\tphi := \psi\inv\circ\phi\psi$ is a hyperbolic automorphism of $\U$ that is easily seen to have attractive fixed point $\talpha$ and repulsive one $\tbeta$. Furthermore:
\begin{itemize}
   \item $C_{\tphi} = \cpsi \cphi \cpsi\inv$, 
            so $C_{\tphi}$  is similar to $\cphi$.
   \item For $f\in\htwo$: $\cpsi D_f = D_{f\circ\psi}$.
   \item $F\in\htwo$ is a $\lambda$-eigenvector for $\cphi$ if and only if 
            $\cpsi =F\circ\psi$ is one for $C_{\tphi}$. 
   \item For $f\in\htwo$, $M[|f|^2](\beta)<\infty 
             \iff M[|f\circ\psi|^2](\beta)<\infty. $        
    \item For any $\gamma>0$, $f\in (z-\alpha)^\gamma\, \htwo 
             \iff \cpsi f \in (z-\talpha)^\gamma \, \htwo$.       
\end{itemize}
Only the last of these needs any comment. If $f\in (z-\alpha)^\gamma\, \htwo$ then
  \begin{eqnarray*} 
      \cpsi f  &\in& (\psi(z) - \alpha)^\gamma \cpsi(\htwo) \\
                 & &\\
                 &=&  \left(\frac{\psi(z)- \psi(\talpha)}{z-\talpha}\right)^\gamma
                         (z-\talpha)^\gamma \htwo \\
                  & &\\       
                  &=& (z-\talpha)^\gamma \, \htwo       
  \end{eqnarray*} 
where the last line follows from the fact that the quotient in the previous one is, in a neighborhood of the closed unit disc, analytic and non-vanishing (because $\psi$ is univalent there), hence both bounded and bounded away from zero on the closed unit disc. Thus $\cpsi((z-\alpha)^\gamma\,\htwo) \subset (z-\talpha)\, \htwo$, and the opposite inclusion follows from this by replacing $\psi$ by $\psi\inv$ and applying $\cpsi$ to both sides of the result.
  
  Theorem \ref{non_canonical_thm} now follows, upon setting $(\talpha,\tbeta) = (+1,-1)$, from Theorems \ref{main_thm_1}, \ref{main_thm_2}, and  \ref{main_thm_3}.
  \end{proof}
What happens if we interchange attractive and repulsive fixed points of $\phi$ in the hypotheses of Theorem \ref{non_canonical_thm}(a)? Then the hypotheses apply to $\phi\inv$, hence so does the conclusion. Since $C_{\phi\inv} = \cphi\inv$, Theorem \ref{non_canonical_thm}(a) and the spectral mapping theorem yield, for example, the following complement to Theorem \ref{main_thm_4}:
\begin{cor} \label{inverse_cor}
  If $\phi$ is a hyperbolic automorphism of $\U$ with attractive fixed point $\alpha$, repulsive one $\beta$, and multiplier $\mu>1$.  Suppose $f \in (z-\alpha)^\frac{1}{2}(z-\beta)^{\frac{1}{2}+\eps}$ for some $\eps\in(0,\frac{1}{2})$, then $\sigma_p(\cphi |_{D_f})$ contains, except possibly for a discrete subset, the open annulus $A(1,\mu^{\eps})$
\end{cor}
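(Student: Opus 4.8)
The plan is to deduce Corollary~\ref{inverse_cor} from Theorem~\ref{non_canonical_thm}(a) by the dual-map trick already sketched in the paragraph that precedes it, together with the spectral mapping theorem applied to the invertible operator $\cphi|_{D_f}$. First I would observe that if $\phi$ has attractive fixed point $\alpha$ and repulsive fixed point $\beta$, then $\phi\inv$ is also a hyperbolic automorphism of $\U$, with multiplier $\mu$, but with the roles of the fixed points reversed: $\beta$ is now attractive and $\alpha$ repulsive. Moreover the doubly-cyclic subspace $D_f = \clspan{\cphi^n f : n\in\Z}$ depends only on the two-sided orbit, so $D_f$ is the same whether computed with $\cphi$ or with $C_{\phi\inv} = \cphi\inv$; thus $C_{\phi\inv}|_{D_f} = (\cphi|_{D_f})\inv$.

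Next I would rewrite the hypothesis so that Theorem~\ref{non_canonical_thm}(a) applies to $\phi\inv$. We are given $f \in (z-\alpha)^{\onehalf}(z-\beta)^{\onehalf+\eps}\,\htwo\backslash\{0\}$. For $\phi\inv$ the attractive fixed point is $\beta$ and the repulsive one is $\alpha$; so, matching against the template ``$f \in (z-(\text{attractive}))^{\onehalf+\eps'}(z-(\text{repulsive}))^{\onehalf+\delta'}\htwo$'' from part~(a), we would like to take $\eps' = \eps$ (the exponent attached to $z-\beta$) and $\delta' = 0$ (the exponent attached to $z-\alpha$). Strictly speaking part~(a) as stated requires $0<\eps',\delta'<1/2$, so to stay honest I would either invoke the $\delta=0$ version —~which is exactly the content of Theorem~\ref{main_thm_4}, denormalized as in Remark~\ref{Chkliar_remark} —~or simply note, as the paper does repeatedly, that for each $\delta'\in(0,\eps)$ we have the weaker inclusion $f\in (z-\beta)^{\onehalf+\delta'}(z-\alpha)^{\onehalf}\htwo$ still giving convergence of the positively-indexed subseries (in the $\phi\inv$ picture) for $|\lambda|>\mu^{-\delta'}$ and of the negatively-indexed subseries for $|\lambda|<1$, and then let $\delta'\uparrow\eps$. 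Either way, Theorem~\ref{non_canonical_thm}(a)/Theorem~\ref{main_thm_4} applied to $\phi\inv$ yields that $\sigma_p\big(C_{\phi\inv}|_{D_f}\big)$ contains, except possibly for a discrete set, the annulus $A(\mu^{-\eps},1)$.

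Finally I would transfer this back to $\cphi$. Since $C_{\phi\inv}|_{D_f} = (\cphi|_{D_f})\inv$, a nonzero scalar $\lambda$ is an eigenvalue of $\cphi|_{D_f}$ if and only if $\lambda\inv$ is an eigenvalue of $C_{\phi\inv}|_{D_f}$ (the eigenvectors are literally the same functions). Hence $\sigma_p(\cphi|_{D_f}) = \{\lambda\inv : \lambda\in\sigma_p(C_{\phi\inv}|_{D_f})\}$. The map $\lambda\mapsto\lambda\inv$ carries the annulus $A(\mu^{-\eps},1)$ onto $A(1,\mu^{\eps})$ and carries discrete subsets to discrete subsets, so $\sigma_p(\cphi|_{D_f})$ contains $A(1,\mu^{\eps})$ up to a possible discrete set, which is the claim.

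There is really no serious obstacle here; the only point requiring a little care is the bookkeeping of which exponent is attached to which fixed point once one passes to $\phi\inv$, and the minor technicality that the clean ``$\delta=0$'' endpoint is covered by Theorem~\ref{main_thm_4} (and its denormalization in Remark~\ref{Chkliar_remark}) rather than by the literal statement of Theorem~\ref{non_canonical_thm}(a), whose hypothesis insists on strictly positive exponents at both fixed points. Once that is acknowledged, the corollary is a two-line consequence of the spectral mapping theorem for the point spectrum of the invertible operator $\cphi|_{D_f}$.
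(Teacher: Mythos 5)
Your proposal is correct and follows essentially the same route as the paper: the paper derives Corollary~\ref{inverse_cor} in the sentence preceding it by applying the fixed-point-reversed version of Theorem~\ref{non_canonical_thm}(a) (equivalently, the denormalized Theorem~\ref{main_thm_4}, as flagged in Remark~\ref{Chkliar_remark}) to $\phi\inv$ and then using $C_{\phi\inv}=\cphi\inv$ together with the spectral mapping theorem. Your extra care about the $\delta=0$ endpoint and the identification $C_{\phi\inv}|_{D_f}=(\cphi|_{D_f})\inv$ is exactly the bookkeeping the paper leaves implicit.
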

The reader can easily supply similar ``reversed'' versions of the other results on the point spectrum of $\cphi |_{D_f}$. 

\subsection{The Nordgren-Rosenthal-Wintrobe Theorem} \label{RNW_thm}
Recall that this result equates a positive solution to the Invariant Subspace Problem for Hilbert space with a positive answer to the question: ``For $\phi$ a hyperbolic automorphism of $\U$, does does every nontrivial minimal $\cphi$-invariant subspace of $\htwo$ contain an eigenfunction?''  The theorem comes about in this way: About forty years ago Caradus \cite{Car} proved the following elementary, but still remarkable, result:
\begin{quote}{\em
 If an operator $T$ maps a separable, infinite dimensional Hilbert space onto itself and has infinite dimensional null space, then {\em every\/} operator on a separable Hilbert space is similar to a scalar multiple of the restriction of $T$ to one of its invariant subspaces. 
 }
 \end{quote}
 Consequently the invariant subspace lattice of $T$ contains that of every operator on a separable Hilbert space. 

Now all composition operators (except the ones induced by constant functions) are one-to-one, so none of these obeys the Caradus theorem's hypotheses. However Nordgren, Rosenthal, and Wintrobe were able to show that if $\phi$ is a hyperbolic automorphism, then for every eigenvalue $\lambda$ of $\cphi$ the operator $\cphi-\lambda I$, which has infinite dimensional kernel (recall Theorem \ref{spectrum_thm}), maps $\htwo$ onto itself. Their restatement of the Invariant Subspace Problem follows from this via the Caradus theorem and the fact that $\cphi$ and $\cphi-\lambda I$ have the same invariant subspaces.

\subsection{Cyclicity}  \label{cyclicity}
Minimal invariant subspaces for invertible operators are both cyclic and doubly invariant---this was the original motivation for studying the subspaces $D_f$. Thus it makes sense, for a given doubly invariant subspace, and especially for a doubly cyclic one $D_f$,  to ask whether or not it is cyclic. Here is a result in that direction in which the cyclicity is the strongest possible: {\em hypercyclicity}---some orbit (with no help from the linear span) is dense.  I state it for canonical hyperbolic automorphisms; the generalization to non-canonical ones follows from the discussion of \S\ref{non-canonical} and the similarity invariance of the property of hypercyclicity. 

\begin{prop}  \label{cyclic_prop}
Suppose $\phi$ is a canonical hyperbolic automorphism of $\U$ and $f\in\sqrt{(z+1)(z-1)} \, \htwo$. Then $\cphidf$ is hypercyclic.
\end{prop}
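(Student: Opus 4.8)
The plan is to verify the Hypercyclicity Criterion (Kitai; Gethner--Shapiro) for the operator $T:=\cphidf$. The only substantive ingredient needed is one already in hand: the hypothesis $f\in\sqrt{(z+1)(z-1)}\,\htwo$ was shown, in the proof of Theorem~\ref{main_thm_2}, to imply \eqref{series_conv}, i.e.\ $\sum_{n\in\Z}\norm{f\circ\phi_n}^2<\infty$; hence $\norm{f\circ\phi_n}\goesto 0$ as $n\goesto+\infty$ and as $n\goesto-\infty$. First I would note that $D_f=\clspan{\cphi^n f: n\in\Z}$ is a closed, separable subspace of $\htwo$ that is invariant under both $\cphi$ and $\cphi\inv$, so $T$ is invertible on $D_f$, with $T\inv=\cphi\inv|_{D_f}=C_{\phi\inv}|_{D_f}$; concretely $T^{-k}g=g\circ\phi_{-k}$ for $g\in D_f$ and $k\ge 0$. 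One should also record that $D_f$ is infinite-dimensional --- on a nonzero finite-dimensional invariant subspace an invertible operator cannot send both the forward and the backward orbit of a nonzero vector to $0$, whereas \eqref{series_conv} (together with $f\ne 0$) does exactly that --- so hypercyclicity of $T$ is at least not excluded on dimensional grounds.

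Next, take the linear span $D_0:=\linspan{\cphi^n f: n\in\Z}$, which is dense in $D_f$ by the definition of $D_f$. For a finite combination $g=\sum_k c_k\,\cphi^k f\in D_0$ one has $T^m g=\sum_k c_k\,(f\circ\phi_{k+m})$ and $T^{-m}g=\sum_k c_k\,(f\circ\phi_{k-m})$, so both $\norm{T^m g}\goesto 0$ and $\norm{T^{-m}g}\goesto 0$ as $m\goesto\infty$, by the decay just noted. Applying the Hypercyclicity Criterion with dense sets $Y=Z=D_0$, with $n_k=k$ (the full sequence of positive integers), and with the backward maps $S_k:=T^{-k}$ --- so that condition (a) ``$T^k y\goesto 0$ on $Y$'' and condition (b) ``$S_k z\goesto 0$ on $Z$'' both hold, while (c) ``$T^k S_k z=z$'' holds identically --- we conclude that $T=\cphidf$ is hypercyclic.

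I do not expect a genuine obstacle: the proposition is essentially a corollary of \eqref{series_conv}, and the only points requiring a word are the invertibility of $T$ on $D_f$ and the infinite-dimensionality of $D_f$, both handled above. Two remarks. First, the eigenfunction approach through the Godefroy--Shapiro criterion is \emph{not} directly available here: Theorem~\ref{main_thm_2} locates eigenvalues of $\cphidf$ only on the unit circle, and it is the decay of the orbits $\cphi^n(f\circ\phi_k)=f\circ\phi_{k+n}$, not any supply of eigenvectors, that powers the argument. Second, since the criterion is verified along the full sequence of positive integers, the same proof shows that $\cphidf$ is in fact (topologically) mixing.
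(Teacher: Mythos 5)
Your proposal is correct and follows essentially the same route as the paper: both verify, on the dense linear span of $\{f\circ\phi_n:n\in\Z\}$, that the forward and backward iterates of $\cphidf$ tend to zero in norm (the sufficient condition for hypercyclicity of an invertible operator, i.e.\ the Hypercyclicity Criterion with $S_k=T^{-k}$), using the summability $\sum_{n\in\Z}\norm{f\circ\phi_n}^2<\infty$ established in the proof of Theorem~\ref{main_thm_2}. Your added remarks on invertibility, infinite-dimensionality, and mixing are sound but not needed for the paper's argument.
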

\begin{proof}
A sufficient condition for an invertible operator on a Banach space $X$ to be hypercyclic is that for some dense subset of the space, the positive powers of both the operator and its inverse tend to zero pointwise in the norm of $X$ (see \cite[Chapter 7, page 109]{S}, for example; much weaker conditions suffice). In our case the dense subspace is just the linear span of $S:=\{f\circ\phi_n: n\in\Z\}$. As we saw in the proof of Theorem \ref{main_thm_2}, our  hypothesis on $f$ insures that  $\sum_{n\in\Z}\norm{f\circ\phi_n}^2 <\infty$ so both $(\cphi^n)_0^\infty$ and $(\cphi^{-n})_0^\infty$ converge pointwise to zero on $S$, and therefore pointwise on its linear span. 
\end{proof}

\begin{rem} \label{hc_remark}{\em
One can obtain the conclusion of Proposition  \ref{cyclic_prop} under different hypotheses. For example if $f$ is continuous with value zero at both of the fixed points of $\phi$, then the same is true of the restriction of $|f|^2$ to $\bdu$. Thus the Poisson integral of $|f|^2$ has radial limit zero at each fixed point of $\phi$ (see \cite[Theorem 11.3, page 244]{Rud}, for example), so by \eqref{ch_var_htwo}, just as in the proof of Proposition \ref{cyclic_prop}, $\cphidf$ satisfies the sufficient condition for hypercyclicity. In fact, all that is really needed for this argument is that the measure 
$$
    E\goesto\int_E |f|^2 \, dm \qquad (E~{\rm measurable}\subset\bdu)
$$     
have symmetric derivative zero at both fixed points of $\phi$ (see the reference above to \cite{Rud}).
}
\end{rem}


\end{document}